\documentclass[12pt]{amsart}
\usepackage{hyperref}

\textwidth=160mm
\textheight=200mm
\topmargin=20mm
\hoffset=-20mm

\newtheorem{theorem}{Theorem}[section]
\newtheorem{definition}[theorem]{Definition}
\newtheorem{proposition}[theorem]{Proposition}
\newtheorem{lemma}[theorem]{Lemma}
\newtheorem{corollary}[theorem]{Corollary}
\newtheorem{conjecture}[theorem]{Conjecture}
\newtheorem{problem}[theorem]{Problem}
\newtheorem{question}[theorem]{Question}

\begin{document}

\title{The defect of generalized Fourier matrices}

\author{Teodor Banica}
\address{T.B.: Department of Mathematics, Cergy-Pontoise University, 95000 Cergy-Pontoise, France. {\tt teodor.banica@u-cergy.fr}}

\subjclass[2000]{05B20 (15B10, 46L37)}
\keywords{Complex Hadamard matrix, Unitary group}

\begin{abstract}
The $N\times N$ complex Hadamard matrices form a real algebraic manifold $C_N$. We have $C_N=M_N(\mathbb T)\cap\sqrt{N}U_N$, and following Tadej and \.Zyczkowski we investigate here the computation of the enveloping tangent space $\widetilde{T}_HC_N=T_HM_N(\mathbb T)\cap T_H\sqrt{N}U_N$, and notably of its dimension $d(H)=\dim(\widetilde{T}_HC_N)$, called undephased defect of $H$. Our main result is an explicit formula for the defect of the Fourier matrix $F_G$ associated to an arbitrary finite abelian group $G=\mathbb Z_{N_1}\times\ldots\times\mathbb Z_{N_r}$. We also comment on the general question ``does the associated quantum permutation group see the defect'', with a probabilistic speculation involving Diaconis-Shahshahani type variables.
\end{abstract}

\maketitle

\tableofcontents

\section*{Introduction}

A complex Hadamard matrix is a square matrix $M\in M_N(\mathbb C)$, all whose entries are on the unit circle, $|H_{ij}|=1$, and whose rows are pairwise orthogonal. If we denote by $\mathbb T$ the unit circle, and by $U_N$ the unitary group, the set formed by such matrices is:
$$C_N=M_N(\mathbb T)\cap\sqrt{N}U_N$$

Observe that $C_N$ is a real algebraic manifold. In general $C_N$ is not smooth, nor it is a complex algebraic manifold. Its structure is in fact very complicated. In order to describe it, best is to introduce the quotient set $C_N\to E_N$ consisting of complex Hadamard matrices under the standard equivalence relation between such matrices, obtained by permuting rows and columns, or multiplying them by complex numbers of modulus 1. At $N=2,3,4,5$ this set $E_N$ is well understood, thanks to the results of Haagerup in \cite{ha1}. At $N=6$, however, this set $E_N$ is only known to contain:
\begin{enumerate}
\item Two tori $\mathbb T^2$, intersecting at the Fourier matrix $F_6$.

\item A circle $\mathbb T$, coming from the Haagerup matrix $H_6^q$.

\item An isolated matrix $T_6$, discovered by Tao in \cite{tao}. 

\item A circulant matrix $C_6$, found by Bj\"orck and Fr\"oberg in \cite{bfr}.

\item An algebraic curve $S_6^\theta$, found by Beauchamp and Nicoara in \cite{bni}.

\item Some other components, all quite poorly understood, see \cite{krs}, \cite{szo}, \cite{tz1}.
\end{enumerate}

It is quite unclear, starting from this data, on how to develop a systematic study to $C_N$. An interesting idea, however, comes from the paper of Tadej and \.Zyczkowski \cite{tz2}. Motivated by some early work in \cite{kar}, \cite{nic} and by questions regarding unistochastic matrices \cite{be+}, they computed the ``enveloping tangent space'' at $H\in C_N$, given by:
$$\widetilde{T}_HC_N=T_HM_N(\mathbb T)\cap T_H\sqrt{N}U_N$$

The description of $\widetilde{T}_HC_N$ found in \cite{tz2} is of course quite a theoretical one, in terms of solutions of a certain system of linear equations associated to $H$. Of particular interest is the dimension of the space of solutions, called ``undephased defect'' of $H$:
$$d(H)=\dim(\widetilde{T}_HC_N)$$

Tadej and \.Zyczkowski did as well in \cite{tz2} a number of defect computations. Their main result there is that, for the Fourier matrix of order $N=p_1^{a_1}\ldots p_s^{a_s}$, we have:
$$d(F_N)=N\prod_{i=1}^s\left(1+a_i-\frac{a_i}{p_i}\right)$$

Back to the general case now, knowing $\widetilde{T}_HC_N$ is of course quite far from understanding the local structure of $C_N$ around a given point $H\in C_N$. As pointed out by Barros e S\'a and Bengtsson in \cite{bbe}, even in the case of the Fourier matrix $F_{12}$ this data is not enough, and finding the local structure of $C_{12}$ around $F_{12}$ would definitely require more work.

In this paper we build on the work of Tadej and \.Zyczkowski \cite{tz2}, by systematically studying the defect of complex Hadamard matrices. We will first recast the main results in \cite{tz2} into our present algebraic geometric language. Then we will investigate the defect of the Fourier matrix $F_G$ of an arbitrary finite abelian group $G=\mathbb Z_{N_1}\times\ldots\times\mathbb Z_{N_r}$. By using the same method as in \cite{tz2}, we will first obtain the following formula:
$$d(F_G)=\sum_{g\in G}\frac{|G|}{ord(g)}$$

In order to compute this quantity, we use two observations, namely the fact that this quantity is multiplicative over the isotypic components of $G$, and the fact that for a product of $p$-groups, the number $c_k$ of elements of order $\leq p^k$ is multiplicative as well.

These observations will lead to the following general formula, valid for any finite abelian group $G$, and which is our main result in this paper:
$$d(F_G)=|G|\prod_p\left( 1+\sum_{k=1}^rp^{(r-k)a_{k-1}+(a_1+\ldots+a_{k-1})-1}(p^{r-k+1}-1)[a_k-a_{k-1}]_{p^{r-k}}\right)$$

Here the numbers $a_0=0$ and $a_1\leq a_2\leq\ldots\leq a_r$, depending on $p$, are such that $G_p=\mathbb Z_{p^{a_1}}\times\ldots\times\mathbb Z_{p^{a_r}}$ is the $p$-component of $G$, and $[a]_q=1+q+q^2+\ldots+q^{a-1}$.

Finally, we will comment on the general question on whether the associated quantum algebraic objects, like the subfactor, planar algebra, or quantum permutation group, ``see'' or not the defect. Our remark here is that for a finite abelian group $G$ we have: 
$$d(F_G)=N^2\lim_{k\to\infty}\lim_{l\to\infty}\frac{1}{l}\sum_{r=1}^l\int_G\chi_r^k$$

Here the quantities $\chi_r$ on the right are Diaconis-Shahshahani type variables \cite{dsh} associated to the embedding $G\subset S_N$, with $N=|G|$. Now since for an arbitrary quantum permutation group $G\subset S_N^+$ these variables make sense, cf. \cite{bcs}, the above formula makes sense as well, and we will raise here the question regarding its possible validity.

The paper is organized as follows: 1 is a preliminary section, in 2-3 we review the notion of defect, from the above-mentioned algebraic geometric point of view, in 4-5 we investigate the Fourier matrices, and 6 contains the probabilistic speculations.

\section{Complex Hadamard matrices}

We consider in this paper various square matrices over the complex numbers, $H\in M_N(\mathbb C)$. The indices of our matrices will usually range in the set $\{0,1,\ldots,N-1\}$.

\begin{definition}
A complex Hadamard matrix is a matrix $H\in M_N(\mathbb C)$ whose entries are on the unit circle, $|H_{ij}|=1$, and whose rows are pairwise orthogonal.
\end{definition}

 It follows from definitions that the columns of $H$ are pairwise orthogonal as well. 

The main example is the Fourier matrix, based on the root of unity $w=e^{2\pi i/N}$:
$$F_N=\begin{pmatrix}
1&1&1&\ldots&1\\
1&w&w^2&\ldots&w^{N-1}\\
\ldots&\ldots&\ldots&\ldots&\ldots\\
1&w^{N-1}&w^{2(N-1)}&\ldots&w^{(N-1)^2}
\end{pmatrix}$$

There are many other complex Hadamard matrices, see \cite{tz1}. One simple way of constructing new examples is by taking tensor products, as for instance:
$$F_2\otimes F_2=\begin{pmatrix}
1&1\\
1&-1
\end{pmatrix}
\otimes
\begin{pmatrix}
1&1\\
1&-1
\end{pmatrix}
=\begin{pmatrix}
1&1&1&1\\
1&-1&1&-1\\
1&1&-1&-1\\ 
1&-1&-1&1
\end{pmatrix}$$

Observe that $F_N$ is nothing but the matrix of the discrete Fourier transform, over the cyclic group $\mathbb Z_N$. More generally, we have the following construction:

\begin{proposition}
The Fourier matrix of a finite abelian group $G=\mathbb Z_{N_1}\times\ldots\times\mathbb Z_{N_r}$ is the complex Hadamard matrix $F_G=F_{N_1}\otimes\ldots\otimes F_{N_r}$.
\end{proposition}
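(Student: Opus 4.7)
The plan is to identify $F_G$ with the (appropriately normalized) character table of $G$, i.e. the matrix whose $(g,\chi)$-entry is $\chi(g)$, with $g\in G$ and $\chi$ running over the dual group $\widehat{G}$. For the cyclic group $\mathbb{Z}_N$, whose characters are $\chi_k(j)=w^{jk}$ with $w=e^{2\pi i/N}$, this definition recovers the matrix $F_N$ displayed above; so the statement becomes the claim that this character-table construction behaves multiplicatively under direct products.

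First I would note that the character-table matrix is a complex Hadamard matrix for any finite abelian $G$: the entries are roots of unity, hence in $\mathbb{T}$, and pairwise orthogonality of the rows is exactly the first Schur orthogonality relation for characters of $G$. So $F_G$ is well-defined as a complex Hadamard matrix of size $|G|\times|G|$.

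Next comes the key step, which I would prove for a product of two factors $G=G_1\times G_2$ and then iterate. The standard fact from the representation theory of finite abelian groups is the canonical isomorphism $\widehat{G_1\times G_2}\cong\widehat{G_1}\times\widehat{G_2}$, where a pair $(\chi_1,\chi_2)$ corresponds to the character $(g_1,g_2)\mapsto\chi_1(g_1)\chi_2(g_2)$. Using the lexicographic indexing of the tensor product (which is exactly the indexing of $G_1\times G_2$ by pairs), one has the entrywise identity
$$(F_{G_1\times G_2})_{(g_1,g_2),(\chi_1,\chi_2)}=\chi_1(g_1)\chi_2(g_2)=(F_{G_1})_{g_1,\chi_1}(F_{G_2})_{g_2,\chi_2}=(F_{G_1}\otimes F_{G_2})_{(g_1,g_2),(\chi_1,\chi_2)},$$
which gives $F_{G_1\times G_2}=F_{G_1}\otimes F_{G_2}$. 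An easy induction on $r$ then yields $F_G=F_{N_1}\otimes\ldots\otimes F_{N_r}$.

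There is no serious obstacle here; the only point requiring care is the bookkeeping of indices, since tensor products are identified with group products only after a fixed choice of bijection between $\{0,\ldots,N_1-1\}\times\ldots\times\{0,\ldots,N_r-1\}$ and $\{0,\ldots,N-1\}$. Once this convention is fixed, the statement is an immediate consequence of the fact that characters of a product are products of characters.
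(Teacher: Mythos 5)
Your argument is correct and follows essentially the same route as the paper, which simply asserts the multiplicativity $F_{G'\times G''}=F_{G'}\otimes F_{G''}$ and combines it with the identification of $F_N$ as the Fourier matrix of $\mathbb Z_N$. You merely supply the character-theoretic justification (via $\widehat{G_1\times G_2}\cong\widehat{G_1}\times\widehat{G_2}$ and Schur orthogonality) that the paper leaves implicit.
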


\begin{proof}
For a product of groups $G=G'\times G''$ we have $F_G=F_{G'}\otimes F_{G''}$, and together with the above observation regarding $F_N$, this gives the equality in the statement.
\end{proof}

As an example, for the Klein group $\mathbb Z_2\times\mathbb Z_2$ we obtain the above matrix $F_2\otimes F_2$.

Now back to the general case, observe that a matrix remains Hadamard when permuting rows and columns, or multiplying them by complex numbers of modulus 1. This is of course quite a trivial observation, and it is convenient to use the following notion:

\begin{definition}
Two matrices are called ``equivalent'' if one can pass from one to the other by permuting rows and columns, or multiplying them by complex numbers of modulus 1. 
\end{definition}

Observe that each Hadamard matrix can be supposed, up to equivalence, to be in ``dephased'' form, in the sense that the first row and column consist of 1 entries only. Note that the Fourier matrices $F_G=F_{N_1}\otimes\ldots\otimes F_{N_r}$ are by definition dephased.

The Fourier matrices are of course quite well understood. The following general construction, inspired from \cite{dit}, allows one, starting for instance from two Fourier matrices, to construct some quite non-trivial examples of complex Hadamard matrices:

\begin{definition}
The deformation of a tensor product $H\otimes K\in M_{NM}(\mathbb C)$, with matrix of parameters $L\in M_{M\times N}(\mathbb T)$, is $H\otimes_LK=(H_{ij}L_{aj}K_{ab})_{ia,jb}$.
\end{definition}

Observe that for the ``flat'' matrix of parameters, $L_{aj}=1$, we obtain the usual tensor product $H\otimes K$. Observe also that in the above definition we can always assume $L$ to be ``dephased'', in the sense that its first row and column consist of 1 entries only.

As a first example, by deforming the tensor product $F_2\otimes F_2$, with the matrix of parameters $L=(^1_1{\ }^1_q)$, we obtain the following complex Hadamard matrix:
$$F_{2,2}^q=\begin{pmatrix}
1&1\\
1&-1
\end{pmatrix}
\otimes_{\begin{pmatrix}
1&1\\
1&q
\end{pmatrix}}
\begin{pmatrix}
1&1\\
1&-1
\end{pmatrix}
=\begin{pmatrix}
1&1&1&1\\
1&-1&q&-q\\
1&1&-1&-1\\ 
1&-1&-q&q
\end{pmatrix}$$

This matrix is in fact the only one at $N=4$, so let us take a closer look at it. As a first observation, at $q\in\{1,i,-1,-i\}$ we obtain tensor products of Fourier matrices:

\begin{proposition}
The matrices $F_{2,2}^q$ are as follows:
\begin{enumerate}
\item At $q=1$ we have $F_{2,2}^q=F_2\otimes F_2$.

\item At $q=-1$ we have $F_{2,2}^q\simeq F_2\otimes F_2$.

\item At $q=\pm i$ we have $F_{2,2}^q\simeq F_4$.
\end{enumerate}
\end{proposition}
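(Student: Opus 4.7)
The plan is to handle each of the three cases by explicit computation: substitute the given value of $q$ into the formula for $F_{2,2}^q$ displayed just above the proposition, and then exhibit an explicit sequence of row/column permutations (possibly together with scalar multiplications of rows/columns) witnessing the required equivalence with either $F_2\otimes F_2$ or $F_4$.

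For item (1), there is nothing to do beyond substitution: plugging $q=1$ into $F_{2,2}^q$ produces exactly the matrix $F_2\otimes F_2$ written out in the excerpt. For item (2), setting $q=-1$ gives a $\pm 1$-matrix with the same first and third rows as $F_2\otimes F_2$, but with rows $2$ and $4$ swapped relative to it; a single transposition of rows $2\leftrightarrow 4$ therefore provides the equivalence. Both of these are pure inspection and should take one line each.

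The main content is item (3). Here I would first write down $F_{2,2}^i$ explicitly and compare it with the matrix $F_4$ obtained from the general Fourier construction with $w=e^{2\pi i/4}=i$. The two matrices agree after the single column transposition $2\leftrightarrow 3$, as one sees by direct inspection of the four rows. For $q=-i$, the same column swap $2\leftrightarrow 3$ reduces $F_{2,2}^{-i}$ to $F_4$ with rows $2$ and $4$ interchanged, so one additional row transposition $2\leftrightarrow 4$ completes the equivalence.

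There is no real obstacle; all three statements are verified by substitution and staring. The only conceptual remark worth including is the following consistency check: for $q\in\{\pm 1\}$ the entries of $F_{2,2}^q$ all lie in $\{\pm 1\}$, so the matrix cannot be equivalent to $F_4$ (whose entries include genuine $4$-th roots of unity) and must coincide up to equivalence with the unique $\pm 1$ Hadamard matrix of order $4$, namely $F_2\otimes F_2$; whereas for $q\in\{\pm i\}$ the entries include $\pm i$, so the equivalence must be with $F_4$. This fits with Haagerup's classification at $N=4$ and explains why the explicit permutations found above are essentially forced.
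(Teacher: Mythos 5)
Your proof is correct and follows essentially the same route as the paper: direct substitution for $q=1$, and explicit row/column permutations for the other cases (the paper happens to use a column swap for $q=-1$ and a $3$-cycle of columns for $q=-i$, but your choices work equally well). One small caveat on your closing ``consistency check'': the fact that $F_{2,2}^{\pm 1}$ has all entries in $\{\pm 1\}$ does not by itself rule out equivalence with $F_4$, since the equivalence relation allows multiplying rows and columns by arbitrary unimodular scalars and can therefore introduce non-real entries; distinguishing $F_2\otimes F_2$ from $F_4$ requires a genuine equivalence invariant (e.g.\ the defect, which is $10$ versus $8$), though this remark is not needed for the proof itself.
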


\begin{proof}
The first assertion is clear, and the second one follows from it, by permuting the third and the fourth columns:
$$F_{2,2}^{-1}=\begin{pmatrix}
1&1&1&1\\
1&-1&-1&1\\
1&1&-1&-1\\ 
1&-1&1&-1
\end{pmatrix}\simeq\begin{pmatrix}
1&1&1&1\\
1&-1&1&-1\\
1&1&-1&-1\\ 
1&-1&-1&1
\end{pmatrix}=F_{2,2}^1$$

As for the third assertion, this follows from the following computation:
$$F_{2,2}^{\pm i}=\begin{pmatrix}
1&1&1&1\\
1&-1&\pm i&\mp i\\
1&1&-1&-1\\ 
1&-1&\mp i&\pm i
\end{pmatrix}\simeq
\begin{pmatrix}
1&1&1&1\\
1&i&-1&-i\\
1&-1&1&-1\\ 
1&-i&-1&i
\end{pmatrix}=F_4$$

Here we have interchanged the second column with the third one in the case $q=i$, and we have used a cyclic permutation of the last 3 columns in the case $q=-i$. 
\end{proof}

As a second example, at $N=6$ we have two possible deformations of the Fourier matrix $F_6=F_2\otimes F_3=F_3\otimes F_2$. In dephased form, these matrices are given by:
$$F_{2,3}^{(r,s)}=F_2
\otimes_{\begin{pmatrix}
1&1\\
1&r\\
1&s
\end{pmatrix}}
F_3\,,\quad\quad\quad
F_{3,2}^{(r,s)}=F_3
\otimes_{\begin{pmatrix}
1&1&1\\
1&r&s
\end{pmatrix}}
F_2$$

In terms of the parameter $q=(r,s)\in\mathbb T^2$, and with $j=e^{2\pi i/3}$, we have:
$$F_{2,3}^q=\begin{pmatrix}
1&1&1&1&1&1\\
1&j&j^2&r&jr&j^2r\\
1&j^2&j&s&j^2s&js\\ 
1&1&1&-1&-1&-1\\
1&j&j^2&-r&-jr&-j^2r\\
1&j^2&j&-s&-j^2s&-js
\end{pmatrix},\quad
F_{3,2}^q
=\begin{pmatrix}
1&1&1&1&1&1\\
1&-1&r&-r&s&-s\\
1&1&j&j&j^2&j^2\\ 
1&-1&jr&-jr&j^2s&-j^2s\\
1&1&j^2&j^2&j&j\\
1&-1&j^2r&-j^2r&js&-js
\end{pmatrix}$$

There are many other examples of Hadamard matrices at $N=6$, see \cite{tz1}. Of particular interest are the following two matrices, due to Haagerup and Tao \cite{ha1}, \cite{tao}:
$$H_6^q=\begin{pmatrix}
1&1&1&1&1&1\\
1&-1&i&i&-i&-i\\ 
1&i&-1&-i&q&-q\\ 
1&i&-i&-1&-q&q\\
1&-i&\bar{q}&-\bar{q}&i&-1\\ 
1&-i&-\bar{q}&\bar{q}&-1&i
\end{pmatrix},\quad 
T_6=\begin{pmatrix}
1&1&1&1&1&1\\ 
1&1&j&j&j^2&j^2\\ 
1&j&1&j^2&j^2&j\\
1&j&j^2&1&j&j^2\\ 
1&j^2&j^2&j&1&j\\ 
1&j^2&j&j^2&j&1
\end{pmatrix}$$

The point with these matrices is that they are ``regular'', in the sense that any scalar product between distinct rows decomposes as a sum of quantities of type $\sum_{k=1}^n\lambda w^k$, with $\lambda\in\mathbb T$ and $w=e^{2\pi i/n}$. Observe that the deformed Fourier matrices are regular too.

The complex Hadamard matrices of size $N\leq 5$ were classified by Haagerup in \cite{ha1} and the regular matrices at $N=6$ were classified in \cite{bbs}, and we have:

\begin{theorem}
The complex Hadamard matrices of small order are as follows:
\begin{enumerate}
\item $F_2,F_3,F_{2,2}^q,F_5$ are the only examples at $N=2,3,4,5$.

\item $F_{2,3}^q,F_{3,2}^q,H_6^q,T_6$ are the only regular examples at $N=6$.
\end{enumerate}
\end{theorem}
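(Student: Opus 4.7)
The plan is to handle the small cases by direct dephased analysis and cite the harder cases from \cite{ha1} and \cite{bbs}. First I would normalize everything: any Hadamard matrix may, after the equivalence of Definition 1.3, be put in dephased form (first row and column equal to $1$). So in each case the task reduces to classifying dephased representatives, then verifying that each model listed in the theorem is indeed dephased (which is already noted for the Fourier matrices and is visible for $H_6^q,T_6$).

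For $N=2,3$, the argument is essentially forced. After dephasing, the remaining $(N-1)\times(N-1)$ block is constrained by the row-orthogonality relations, which combined with $|H_{ij}|=1$ leave no freedom: at $N=2$ the bottom-right entry must be $-1$, giving $F_2$; at $N=3$ the four free entries are powers of $j=e^{2\pi i/3}$ forming a $2\times 2$ block with both $1+x+y=0$ row-sums and $1+x+y=0$ column-sums, forcing $F_3$. For $N=4$, after dephasing one writes the rows as unimodular vectors and uses orthogonality with the all-ones row to parametrize; the remaining orthogonalities collapse to a single modulus-one parameter $q$, and one checks that any solution is equivalent to $F_{2,2}^q$ (e.g. by normalizing the $(2,2)$ entry to $-1$).

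The case $N=5$ is the main obstacle and requires Haagerup's argument in \cite{ha1}. The key step is his lemma stating that if five unit-modulus complex numbers $\lambda_1,\ldots,\lambda_5$ satisfy a certain ``vanishing sum'' relation, they must be proportional to the fifth roots of unity. Applied to the six scalar products among pairs of rows of a dephased $5\times 5$ Hadamard matrix, this lemma forces every entry to be a fifth root of unity, and a short combinatorial argument then identifies the matrix with $F_5$ up to equivalence. I would reproduce only the statement of Haagerup's lemma and refer to \cite{ha1} for the arithmetic of vanishing sums of roots of unity, which is the technically delicate point.

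For the regular $N=6$ classification, the strategy of \cite{bbs} is to exploit the regularity hypothesis: each of the $\binom{6}{2}=15$ scalar products between distinct rows is a vanishing sum of roots of unity of bounded length, and such sums in characteristic zero are classified (via Lam-Leung type results) as $\mathbb Z$-combinations of ``basic'' vanishing sums over primes $2,3,5$. A case analysis on which primes occur partitions the possibilities into four families; in each family an explicit normalization via the equivalence relation reduces the unknowns to one or two modulus-one parameters, matching exactly $F_{2,3}^q$, $F_{3,2}^q$, $H_6^q$, $T_6$. I would cite \cite{bbs} for the full case analysis, noting that the main substantive input beyond regularity is the arithmetic classification of vanishing sums of short length over $\{2,3,5\}$-roots of unity.
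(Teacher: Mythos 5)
Your proposal is correct and follows essentially the same route as the paper: elementary case analysis for $N\leq 4$ (the paper phrases this via the classification of vanishing sums $a_1+\ldots+a_N=0$ of unimodular numbers, which is equivalent in substance to your dephased row-by-row analysis), with the $N=5$ case delegated to Haagerup \cite{ha1} and the regular $N=6$ case to \cite{bbs}. The only cosmetic difference is that the paper organizes the small cases around the ``trivial vanishing sum'' structure $a-a=0$, $a+ja+j^2a=0$, $a-a+b-b=0$ rather than around an explicit dephased parametrization.
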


\begin{proof}
The idea is that, the rows of our matrix being pairwise orthogonal, we must first understand the solutions of $a_1+\ldots+a_N=0$, with $|a_1|=\ldots=|a_N|=1$. 

At $N=2,3,4$ this is very simple: our equation must be, up to a permutation of the terms, a ``trivial'' equation of type $a-a=0$, $a+ja+j^2a=0$ or $a-a+b-b=0$ respectively, with $|a|=|b|=1$ and $j=e^{2\pi i/3}$. Now by using this observation, and trying to build a complex Hadamard matrix of size $N$, this leads to $F_2,F_3,F_{2,2}^q$ only.

At $N=5,6$ understanding the solutions of the above equation is a particularly difficult task. However, Haagerup was able to obtain the above result (1), see \cite{ha1}. As for (2), this result is from \cite{bbs}, with the proof this time being also long, but purely combinatorial.
\end{proof}

\section{Enveloping tangent spaces}

We study now the real algebraic manifold formed by all the $N\times N$ complex Hadamard matrices. Let us begin with some definitions:

\begin{definition}
Let $M_N'(\mathbb C)\subset M_N(\mathbb C)$ be the set of matrices having $1$ on the first row and column, and make $S_{N-1}\times S_{N-1}$ act on it by permuting rows and columns. We set:
\begin{enumerate}
\item $C_N=M_N(\mathbb T)\cap\sqrt{N}U_N$: the manifold of $N\times N$ complex Hadamard matrices.

\item $D_N=C_N\cap M_N'(\mathbb C)$: the submanifold consisting of dephased matrices.

\item $E_N=D_N/(S_{N-1}\times S_{N-1})$: the equivalence classes of matrices in $C_N$.
\end{enumerate}
\end{definition}

Here, and in what follows, by ``manifold'' we will always mean ``real algebraic manifold''. Observe that $C_N,D_N$ are indeed real algebraic manifolds, as being by definition intersections of such manifolds. As for $E_N$, as defined above, this is just a set.

Observe that we have surjective maps, as follows:
$$C_N\to D_N\to E_N$$

Here the first map is by definition obtained by ``dephasing'' the matrix, in the obvious way, and the second map is the canonical quotient map. Note that the dephasing map $C_N\to D_N$ is continuous, and is a covering of real algebraic manifolds.

Let us first record a result coming from the classification results above:

\begin{proposition}
The sets $E_N$ with $N$ small are as follows:
\begin{enumerate}
\item $E_2=\{F_2\}$, $E_3=\{F_3\}$, $E_5=\{F_5\}$.

\item $E_4=\{F_{2,2}^q|q\in\mathbb T\}$.

\item $E_6=\{F_{2,3}^q|q\in\mathbb T^2\}\cup\{F_{3,2}^q|q\in\mathbb T^2\}\cup\{H_6^q|q\in\mathbb T\}\cup\{T_6\}\cup X$.
\end{enumerate}
\end{proposition}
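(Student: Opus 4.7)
The plan is to derive this as essentially a translation of Theorem 1.6 into the language of equivalence classes. Since $E_N = D_N/(S_{N-1}\times S_{N-1})$ is by construction the set of complex Hadamard matrices of size $N$ considered up to the equivalence relation of Definition 1.3, each part of the statement corresponds to collecting together the orbits produced by Theorem 1.6.

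First I would handle parts (1) and (2). For $N\in\{2,3,5\}$, Theorem 1.6(1) asserts that every complex Hadamard matrix is equivalent to $F_N$, so $E_N$ has a single element; this gives (1). For $N=4$, Theorem 1.6(1) states that every matrix is equivalent to some $F_{2,2}^q$ with $q\in\mathbb{T}$, so that the image of the map $\mathbb{T}\to E_4$, $q\mapsto[F_{2,2}^q]$, is all of $E_4$; this is exactly (2). Note that I do \emph{not} claim this map is injective, and indeed Proposition 1.5 shows that it factors nontrivially (with $q=1$ and $q=-1$ giving the same class, and $q=\pm i$ also giving one class). The statement (2) is set-theoretic, so no injectivity needs to be verified.

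For part (3) I would combine Theorem 1.6(2) with a pure definition. Theorem 1.6(2) says that the regular complex Hadamard matrices at $N=6$ are, up to equivalence, exactly the members of the families $F_{2,3}^q,F_{3,2}^q,H_6^q,T_6$; hence the subset of $E_6$ consisting of regular matrices equals $\{F_{2,3}^q|q\in\mathbb{T}^2\}\cup\{F_{3,2}^q|q\in\mathbb{T}^2\}\cup\{H_6^q|q\in\mathbb{T}\}\cup\{T_6\}$. One then simply defines $X\subset E_6$ to be the complement, i.e.\ the set of equivalence classes of non-regular $6\times 6$ complex Hadamard matrices. The statement in (3) then holds by definition of $X$, and the contents of the introduction (items concerning $C_6$, $S_6^\theta$, etc.) indicate that $X$ is nonempty and not well understood.

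There is no real obstacle here beyond bookkeeping, since the substantive content has been offloaded to Theorem 1.6. The only mildly delicate point is the reminder, at each $N$, that every matrix in $C_N$ admits a dephased representative (so that the quotient $C_N\to E_N$ factors through $D_N$ and the classification of matrices in $C_N$ up to equivalence coincides with the description of $E_N$); this is recorded after Definition 1.3. Thus the statement follows by applying Theorem 1.6 parts (1) and (2) and reading off equivalence classes, with $X$ introduced as a placeholder for the non-regular part at $N=6$.
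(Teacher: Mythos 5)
Your proposal is correct and matches the paper's proof, which likewise treats the statement as a direct reformulation of Theorem 1.6 with $X$ defined as the set of equivalence classes of non-regular $6\times 6$ complex Hadamard matrices. The extra remarks on non-injectivity of $q\mapsto[F_{2,2}^q]$ and on dephased representatives are sound but not needed beyond what the paper already records.
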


\begin{proof}
This is just a reformulation of Theorem 1.6 above, with $X$ being by definition the set of equivalence classes of non-regular $6\times6$ complex Hadamard matrices.
\end{proof}

With this result in hand, we can now formulate a few basic observations:

\begin{proposition}
The manifolds $C_N,D_N$ are in general not smooth, and not connected. Nor they are in general complex algebraic manifolds.
\end{proposition}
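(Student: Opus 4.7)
The strategy is to exhibit concrete counterexamples at $N=6$, where Proposition 2.2 already provides enough structural information. Each pathology will be witnessed at the level of $E_6$, and then propagated to $D_6$ and $C_6$ via the canonical continuous surjections $C_N\to D_N\to E_N$.

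For non-connectedness, I would observe that $E_6$ is visibly disconnected: the isolated point $T_6$ cannot be reached from any of the continuous families $\{F_{2,3}^q\}$, $\{F_{3,2}^q\}$, $\{H_6^q\}$ by a path in $E_6$, as these are separate strata in the classification. Continuous surjections preserve connectedness, so the disconnectedness of $E_6$ forces the same property on $D_6$, and hence, since $C_6\to D_6$ is likewise continuous and surjective, on $C_6$ as well.

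For non-smoothness, I would use that $F_6=F_2\otimes F_3=F_3\otimes F_2$ lies in the intersection of the two deformation tori $\{F_{2,3}^q\}_{q\in\mathbb T^2}$ and $\{F_{3,2}^q\}_{q\in\mathbb T^2}$, both of real dimension $2$ in $D_6$. The plan is to check that the tangent planes of these two tori at $F_6$ are distinct, by inspecting the explicit parametrizations displayed before Theorem 1.6 and computing the first-order variations at $(r,s)=(1,1)$. The union of two transverse $2$-planes meeting at a point is not locally Euclidean, hence $D_6$ (and by pullback $C_6$) must be singular at the corresponding points.

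For not being a complex algebraic manifold, a real-dimension parity argument suffices. In $D_6$, the component arising from the Haagerup family $\{H_6^q\}_{q\in\mathbb T}$ is a finite disjoint union of circles of real dimension $1$, which is odd and hence incompatible with any complex manifold structure. For $C_6$, the dephasing fibers are torus orbits of real dimension $2N-1=11$, so the preimage of the isolated class of $T_6$ is a component of $C_6$ of real dimension $11$, again odd. Hence neither $D_6$ nor $C_6$ is a complex algebraic manifold in general. The main obstacle is the singularity claim at $F_6$: one must confirm that the two $2$-dimensional tangent planes are genuinely distinct rather than coincident, which requires an explicit tangent computation rather than a soft dimension count.
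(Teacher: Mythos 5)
Your connectedness argument is essentially the paper's: both hinge on the fact that $T_6$ is an isolated point of $D_6$. Be aware that this is a nontrivial external input (due to Nicoara and to Tadej--\.Zyczkowski, via the vanishing of the dephased defect of $T_6$), not something you can read off from the classification list --- since $E_6$ contains an unclassified set $X$, it is not automatic that $T_6$ ``cannot be reached'' from the rest; a priori it could be a limit of matrices in $X$. Granting isolation, your lifting to $D_6$ and $C_6$ is fine, and your $C_6$ argument for the complex-manifold claim (the phase orbit of $T_6$ is a clopen union of tori of odd real dimension $2N-1=11$) also works.

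The other two parts have genuine gaps, both traceable to working at $N=6$, where the classification is incomplete. For non-smoothness: even if the tangent planes of the two tori $\{F_{2,3}^q\}$ and $\{F_{3,2}^q\}$ at $F_6$ are distinct, this does not make $D_6$ singular there. A union of two transverse $2$-planes is indeed not locally Euclidean, but $D_6$ near $F_6$ need not \emph{equal} that union --- it could be (and in fact is known to be) strictly larger, e.g.\ a higher-dimensional family containing both tori, which might be smooth. Two transverse submanifolds of $\mathbb R^4$ sit happily inside the smooth manifold $\mathbb R^4$. You would need to control \emph{all} of $D_6$ in a neighborhood of $F_6$, which the $N=6$ classification does not provide. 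Likewise, your claim that the Haagerup circle is a $1$-dimensional \emph{component} of $D_6$ is unsupported: nothing rules out its being contained in a larger even-dimensional piece coming from $X$. The paper avoids both problems by working at small $N$ where the classification is complete: $D_4$ is \emph{exactly} a union of $36$ circles, two of which meet at the single point $q=1$ (a union of two distinct curves through a point is not locally Euclidean, hence not smooth), and $C_2$, $D_4$ have odd real dimensions $3$ and $1$, which kills any complex manifold structure. If you want to keep your outline, replace the $F_6$ and Haagerup steps by these $N=2,4$ examples.
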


\begin{proof}
All the assertions basically follow from Proposition 2.2, by using the canonical surjective maps $C_N\to D_N\to E_N$ in order to get back to $D_N$ and $C_N$. 

Indeed, from $|E_2|=1$ and from $E_4\simeq\mathbb T$ we obtain that $C_2,D_4$ have real dimensions 3 and 1, so they cannot have a complex algebraic manifold structure.

Regarding now the smoothness claim, let us first look at $D_4$. This manifold consists of $|S_3\times S_3|=36$ copies of $\{F_{2,2}^q|q\in\mathbb T\}\simeq\mathbb T$, that we will denote as follows:
$$\mathbb T^{(\pi,\sigma)}=\{(\pi,\sigma)F_{2,2}^q|q\in\mathbb T\}$$

Here the element $(\pi,\sigma)\in S_3\times S_3$ acts as usual on $M_4'(\mathbb C)$, with $\pi$ permuting the rows labeled 1,2,3, and $\sigma$ permuting the columns labeled 1,2,3 (recall that, according to our usual conventions, the first row and column of our matrices are labeled 0). 

The point now is that some of these 36 copies of $\mathbb T$ intersect at $q=1$, which prevents $D_4$ from being smooth. For instance the copy of $\mathbb T$ associated to $((12),(12))$ is: 
$$\mathbb T^{((12),(12))}=\left\{\begin{pmatrix}
1&1&1&1\\
1&-1&1&-1\\
1&q&-1&-q\\
1&-q&-1&q
\end{pmatrix}\Big|q\in\mathbb T\right\}$$

We can see from this formula that $\mathbb T^{((12),(12))}$ is distinct from $\mathbb T^{(id,id)}=\{F_{2,2}^q|q\in\mathbb T\}$, but intersects it at $q=1$, so $D_4$ is indeed not smooth. Similarly, $C_4$ is not smooth either, because it consists of 36 copies of $\mathbb T\times\mathbb T^7\simeq\mathbb T^8$, which intersect non-trivially at $q=1$.

For the connectedness question now, this follows from the fact that $T_6$ and its conjugates are isolated in $D_6$, cf. \cite{nic}, \cite{tz2}. By lifting, $C_6$ cannot be connected either.
\end{proof}

Regarding now the set $E_N$, it is not clear how to give it a structure of real algebraic manifold, or even a nice topological space structure. Observe that we have an embedding $E_N\subset M_N(\mathbb T)$ coming from the lexicographic order on $M_N(\mathbb T)$ induced by the $[0,2\pi)$ order on $\mathbb T$. It is not clear whether this embedding produces a nice space or not. 

In order to get now some insight into the structure of $C_N,D_N$, we will compute the corresponding enveloping tangent spaces, coming from Definition 2.1 above: 

\begin{definition}
The ``enveloping tangent spaces'' of $C_N,D_N$ are defined by:
\begin{enumerate}
\item For $H\in C_N$ we let $\widetilde{T}_HC_N=T_HM_N(\mathbb T)\cap T_H\sqrt{N}U_N$.

\item For $H\in D_N$ we let $\widetilde{T}_HD_N=\widetilde{T}_HC_N\cap T_HM_N'(\mathbb C)$.
\end{enumerate}
\end{definition}

Here, and in what follows, we denote by $T_HM$ the tangent space to a smooth real manifold $M$, at a given point $H\in M$. Observe that, $C_N,D_N$ being not smooth manifolds in general, $\widetilde{T}_HC_N,\widetilde{T}_HD_N$ are of course not their ``tangent spaces'' in the usual sense.

Tadej and \.Zyczkowski computed in \cite{tz2} a certain vector space which coincides with the above enveloping tangent space. Their result, formulated in our terms, is as follows:

\begin{theorem}
We have a canonical identification
$$\widetilde{T}_HC_N=\left\{A\in M_N(\mathbb R)\Big|\sum_kH_{ik}\bar{H}_{jk}(A_{ik}-A_{jk})=0,\,\forall i,j\right\}$$
and $\widetilde{T}_HD_N$ consists of the matrices $A\in\widetilde{T}_HC_N$ having $0$ on the first row and column.
\end{theorem}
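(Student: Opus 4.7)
The plan is to compute the two tangent spaces appearing in Definition 2.4 separately, express each in coordinates, and then intersect them.

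For the first factor, I would parametrize $T_HM_N(\mathbb T)$ by observing that since each entry $H_{ij}$ moves along a circle, the smooth curves through $H$ in $M_N(\mathbb T)$ have the form $H_{ij}(t)=H_{ij}e^{itA_{ij}}$ for some real matrix $A=(A_{ij})\in M_N(\mathbb R)$. Differentiating at $t=0$ gives the canonical identification
$$T_HM_N(\mathbb T)=\bigl\{X\in M_N(\mathbb C)\,\big|\,X_{ij}=iA_{ij}H_{ij},\ A\in M_N(\mathbb R)\bigr\}.$$
For the second factor, I would use that $\sqrt{N}U_N$ is cut out by the real-analytic equation $HH^*=N\cdot 1$, so its tangent space at $H$ consists of those $X\in M_N(\mathbb C)$ for which $XH^*+HX^*=0$, i.e.\ such that $XH^*$ is anti-Hermitian.

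The key step is then to impose this anti-Hermitian condition on the parametrized $X_{ij}=iA_{ij}H_{ij}$. Computing the $(i,j)$-entry of $XH^*$ yields
$$(XH^*)_{ij}=i\sum_kA_{ik}H_{ik}\bar H_{jk},$$
and the anti-Hermitian condition $(XH^*)_{ij}=-\overline{(XH^*)_{ji}}$ rearranges, after cancelling the common factor $i$, to
$$\sum_kA_{ik}H_{ik}\bar H_{jk}=\sum_kA_{jk}H_{ik}\bar H_{jk},$$
which is precisely the stated equation $\sum_kH_{ik}\bar H_{jk}(A_{ik}-A_{jk})=0$. This identifies $\widetilde T_HC_N$ with the claimed space of real matrices.

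For the dephased version, I would note that $M_N'(\mathbb C)$ is the affine subspace of $M_N(\mathbb C)$ determined by freezing the first row and column to $1$, so $T_HM_N'(\mathbb C)$ consists of matrices $X$ vanishing on the first row and column. Under the parametrization $X_{ij}=iA_{ij}H_{ij}$, since $H_{ij}\neq 0$, this translates exactly to $A_{0j}=A_{i0}=0$, giving the second assertion.

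The bulk of the argument is straightforward linear algebra once the parametrizations are set up; the only mild subtlety is keeping track of the fact that one works with a \emph{real} tangent space inside a complex matrix space, so that the Hermitian/anti-Hermitian decomposition of $XH^*$ must be handled carefully rather than by splitting into holomorphic parts. I do not foresee a genuine obstacle.
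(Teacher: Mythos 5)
Your proof is correct and follows essentially the same route as the paper: you parametrize $T_HM_N(\mathbb T)$ by a real matrix $A$ via $X_{ij}=iA_{ij}H_{ij}$ (the paper writes the same tangent vectors as $A_{ij}(Y_{ij}\dot X_{ij}-X_{ij}\dot Y_{ij})$) and then impose the unitarity constraint, which the paper phrases as orthogonality to the differentials $d\langle H_i,H_j\rangle$ and you phrase, equivalently and a bit more cleanly, as $XH^*$ being anti-Hermitian. The resulting computation and the treatment of the dephased case are the same.
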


\begin{proof}
We use the notation $H_{ij}=X_{ij}+iY_{ij}$. We know that $M_N(\mathbb T)$ is defined by the algebraic relations $|H_{ij}|^2=1$, with $i,j\in\{0,1,\ldots,N-1\}$, and we have:
$$d|H_{ij}|^2=d(X_{ij}^2+Y_{ij}^2)=2(X_{ij}\dot{X}_{ij}+Y_{ij}\dot{Y}_{ij})$$

Consider now an arbitrary vector $v\in T_HM_N(\mathbb C)$, written as:
$$v=\sum_{ij}\alpha_{ij}\dot{X}_{ij}+\beta_{ij}\dot{Y}_{ij}$$

By taking the scalar product with the above quantity, we get:
$$<v,d|H_{ij}|^2>=2(\alpha_{ij}X_{ij}+\beta_{ij}Y_{ij})$$

Now since these scalar products vanish when we have $\alpha_{ij}=A_{ij}Y_{ij}$ and $\beta_{ij}=-A_{ij}X_{ij}$, for certain numbers $A_{ij}\in\mathbb R$, we obtain the following formula: 
$$T_HM_N(\mathbb T)=\left\{\sum_{ij}A_{ij}(Y_{ij}\dot{X}_{ij}-X_{ij}\dot{Y}_{ij})\Big|A_{ij}\in\mathbb R\right\}$$

Let us compute now the subspace in the statement. We know that $\sqrt{N}U_N$ is defined by the algebraic relations $<H_i,H_j>=N\delta_{ij}$, where $H_0,\ldots,H_{N-1}$ are the rows of $H$. Since the relations $<H_i,H_i>=N$ are automatic for matrices $H\in M_N(\mathbb T)$, we can remove them from the computation. So, if for $i\neq j$ we let $L_{ij}=d<H_i,H_j>$, then we have:
$$\widetilde{T}_HC_N=\left\{ v\in T_HM_N(\mathbb T)|<v,L_{ij}>=0,\,\forall i\neq j\right\}$$

So, let us compute the scalar product appearing above. First, we have:
\begin{eqnarray*}
L_{ij}
&=&d<H_i,H_j>=d\left(\sum_kH_{ik}\bar{H}_{jk}\right)=\sum_kH_{ik}\dot{\bar{H}}_{jk}+\bar{H}_{jk}\dot{H}_{ik}\\
&=&\sum_k(X_{ik}+iY_{ik})(\dot{X}_{jk}-i\dot{Y}_{jk})+(X_{jk}-iY_{jk})(\dot{X}_{ik}+i\dot{Y}_{ik})
\end{eqnarray*}

Consider now an arbitrary vector $v\in T_HM_N(\mathbb T)$. According to the above formula of $T_HM_N(\mathbb T)$, we can write this vector in terms of a real matrix $(A_{lk})$, as follows:
$$v=\sum_{lk}A_{lk}(Y_{lk}\dot{X}_{lk}-X_{lk}\dot{Y}_{lk})$$

Thus the above scalar products defining the space $\widetilde{T}_HC_N$ are given by:
\begin{eqnarray*}
<v,L_{ij}>
&=&\sum_k<A_{ik}(Y_{ik}\dot{X}_{ik}-X_{ik}\dot{Y}_{ik})+A_{jk}(Y_{jk}\dot{X}_{jk}-X_{jk}\dot{Y}_{jk}),L_{ij}>\\
&=&\sum_kA_{ik}(Y_{ik}+iX_{ik})(X_{jk}+iY_{jk})+A_{jk}(Y_{jk}-iX_{jk})(X_{ik}-iY_{ik})\\
&=&i\sum_kA_{ik}\bar{H}_{ik}H_{jk}-A_{jk}H_{jk}\bar{H}_{ik}\\
&=&i\sum_k(A_{ik}-A_{jk})\bar{H}_{ik}H_{jk}
\end{eqnarray*}

Thus we have reached to the description of $\widetilde{T}_HC_N$ in the statement. The description of $\widetilde{T}_HD_N$ is the statement follows as well, by intersecting with $T_HM_N'(\mathbb C)$.
\end{proof}

Let us look now at the dimensions of the spaces in Theorem 2.5:

\begin{definition}
Associated to a complex Hadamard matrix $H\in M_N(\mathbb C)$ are:
\begin{enumerate}
\item The dephased defect $d'(H)=\dim(\widetilde{T}_HD_N)$.

\item The undephased defect $d(H)=\dim(\widetilde{T}_HC_N)$.
\end{enumerate}
\end{definition}

Observe that the dephased defect $d'(H)$ is nothing but the quantity ${\rm def}(H)$ from \cite{tz2}, simply called ``defect'' there. In what follows we will rather use the quantity $d(H)$, which behaves better with respect to the various operations on complex Hadamard matrices. 

The dephased and undephased defect are related by the following formula:

\begin{proposition}
$d'(H)=d(H)-2N+1$.
\end{proposition}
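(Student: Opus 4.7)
My plan is to exhibit an explicit $(2N-1)$-dimensional complement of $\widetilde{T}_HD_N$ inside $\widetilde{T}_HC_N$, coming from the natural action by dephasing, and then read the claimed formula off a dimension count.

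The key observation is that the product of tori $\mathbb T^N\times\mathbb T^N$ acts on $C_N$ by $(u,v)\cdot H=\operatorname{diag}(u)\,H\,\operatorname{diag}(v)$, with stabilizer of $H$ equal to the diagonal $\{(u,u^{-1}):u\in\mathbb T\}$, so the orbit through any $H\in C_N$ has real dimension $2N-1$. A short differentiation shows that, in the coordinates of Theorem~2.5, the tangent space to this orbit at $H$ is the subspace
$$V=\left\{A\in M_N(\mathbb R)\,\Big|\,A_{ij}=b_i+c_j\text{ for some }b,c\in\mathbb R^N\right\}\Big/\text{(diagonal redundancy)},$$
of real dimension $2N-1$. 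Using the description in Theorem~2.5 it is immediate that $V\subset\widetilde{T}_HC_N$: for $A_{ij}=b_i+c_j$ the difference $A_{ik}-A_{jk}=b_i-b_j$ is independent of $k$, so
$$\sum_kH_{ik}\bar H_{jk}(A_{ik}-A_{jk})=(b_i-b_j)\sum_kH_{ik}\bar H_{jk}=(b_i-b_j)\cdot N\delta_{ij}=0$$
for all $i\neq j$. Thus $V$ is a genuine $(2N-1)$-dimensional subspace of $\widetilde{T}_HC_N$.

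Next I would show that $V$ is a direct complement of $\widetilde{T}_HD_N$ inside $\widetilde{T}_HC_N$. For this, consider the linear restriction map
$$\rho:\widetilde{T}_HC_N\longrightarrow\mathbb R^{2N-1},\qquad A\longmapsto\text{(first row and first column of }A),$$
whose kernel is by definition $\widetilde{T}_HD_N$. The restriction $\rho|_V$ is surjective: given any prescribed first row $(r_0,r_1,\ldots,r_{N-1})$ and first column $(r_0,s_1,\ldots,s_{N-1})$ sharing the entry $r_0$ at position $(0,0)$, one solves $b_0+c_j=r_j$ and $b_i+c_0=s_i$ by normalizing $b_0=0$, setting $c_j=r_j$ and $b_i=s_i-r_0$. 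Hence $\rho$ itself is surjective, and moreover $V\cap\widetilde{T}_HD_N=0$.

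Combining these two facts yields the short exact sequence
$$0\longrightarrow\widetilde{T}_HD_N\longrightarrow\widetilde{T}_HC_N\stackrel{\rho}{\longrightarrow}\mathbb R^{2N-1}\longrightarrow 0,$$
from which $d(H)=d'(H)+(2N-1)$, as claimed. The only mildly subtle step is the surjectivity of $\rho$; everything else is a linear-algebra bookkeeping exercise once the orbit subspace $V$ is identified, so I expect no real obstacle beyond verifying the coordinate conventions of Theorem~2.5.
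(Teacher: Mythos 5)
Your proof is correct and follows essentially the same route as the paper: both identify the $(2N-1)$-dimensional space of matrices $A_{ij}=b_i+c_j$ as a subspace of $\widetilde{T}_HC_N$ complementary to $\widetilde{T}_HD_N$ (the paper calls it $M_N^\circ(\mathbb R)$ and splits any $A$ explicitly as $B_{ij}=A_{i0}+A_{0j}-A_{00}$ plus a dephased remainder, whereas you get the same conclusion via surjectivity of the restriction-to-first-row-and-column map and a dimension count). The torus-orbit interpretation is a pleasant conceptual gloss but does not change the substance of the argument.
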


\begin{proof}
Consider the vector space $M_N^\circ(\mathbb R)\subset M_N(\mathbb R)$ formed by the matrices of type $A_{ij}=a_i+b_j$, with $a_i,b_j\in\mathbb R$. We claim that we have an isomorphism as follows:
$$\widetilde{T}_HC_N\simeq \widetilde{T}_HD_N\oplus M_N^\circ(\mathbb R)$$

Indeed, the first remark is that for a matrix of type $A_{ij}=a_i+b_j$ we have:
\begin{eqnarray*}
\sum_kH_{ik}\bar{H}_{jk}(A_{ik}-A_{jk})
&=&\sum_kH_{ik}\bar{H}_{jk}(a_i+b_k-a_j-b_k)\\
&=&(a_i-a_j)\sum_kH_{ik}\bar{H}_{jk}\\
&=&(a_i-a_j)\delta_{ij}=0
\end{eqnarray*}

Thus we have $M_N^\circ(\mathbb R)\subset\widetilde{T}_HC_N$. Also, we have $M_N^\circ(\mathbb R)\cap\widetilde{T}_HD_N=\{0\}$, because $a_i+b_j=0$ if $i=0$ or $j=0$ implies $a_i+b_j=0$ for any $i,j$. Finally, let $A\in\widetilde{T}_HC_N$, and  decompose it as $A=B+C$, with $B_{ij}=A_{i0}+A_{0j}-A_{00}$ and $C_{ij}=A_{ij}-B_{ij}$. Then we have $B\in M_N^\circ(\mathbb R)$, so $C\in \widetilde{T}_HC_N$, and since $C_{i0}=C_{0j}=0$ for any $i,j$, we conclude that $C\in\widetilde{T}_HD_N$.

Summing up, we have proved the above vector space claim. Now since we have $\dim(M_N^\circ(\mathbb R))=2N-1$, we obtain the formula in the statement.
\end{proof}

The study of the defect is motivated by the following simple fact:

\begin{proposition}
If $d'(H)=0$ then $H$  is isolated inside $D_N$.
\end{proposition}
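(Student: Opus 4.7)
The plan is to argue by contradiction, showing that any accumulation point of $D_N$ produces a nonzero element of $\widetilde{T}_HD_N$, by extracting a limit secant direction. Concretely, suppose $H$ is not isolated in $D_N$: then there exists a sequence $(H^{(n)})\subset D_N$ with $H^{(n)}\neq H$ and $H^{(n)}\to H$. Write $H^{(n)}=H+\varepsilon_nw_n$, where $\varepsilon_n=\|H^{(n)}-H\|\to0$ and $w_n$ is a unit vector in $M_N(\mathbb C)$.

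Next, I would extract a limit direction. Since the unit sphere in $M_N(\mathbb C)$ is compact, pass to a subsequence with $w_n\to w$, where $\|w\|=1$; in particular $w\neq 0$. The goal is then to show $w\in\widetilde{T}_HD_N$, which contradicts the hypothesis $d'(H)=0$.

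To verify that $w$ lies in the enveloping tangent space, I would linearize the defining equations at $H$. Each of $M_N(\mathbb T)$, $\sqrt{N}U_N$, and $M_N'(\mathbb C)$ is cut out from $M_N(\mathbb C)$ by finitely many smooth real-valued algebraic equations $f_\alpha=0$; since $H,H^{(n)}\in D_N$, all these equations vanish on both. A Taylor expansion gives
$$0=f_\alpha(H+\varepsilon_nw_n)-f_\alpha(H)=\varepsilon_n\,(df_\alpha)_H(w_n)+O(\varepsilon_n^2).$$
Dividing by $\varepsilon_n$ and letting $n\to\infty$ yields $(df_\alpha)_H(w)=0$ for every $\alpha$. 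Since these are precisely the linear equations defining $T_HM_N(\mathbb T)$, $T_H\sqrt{N}U_N$, and $T_HM_N'(\mathbb C)$, we conclude $w\in \widetilde{T}_HD_N$, contradicting $d'(H)=\dim\widetilde{T}_HD_N=0$.

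The argument is essentially routine, so I do not expect a real obstacle; the only small point requiring care is that one must apply the linearization to the full family of defining equations of $C_N\cap M_N'(\mathbb C)$ simultaneously, using the characterization of $\widetilde{T}_HD_N$ from Theorem~2.5 (together with the trivial linearization of the dephasing conditions $H_{0j}=H_{i0}=1$) to ensure that the resulting limit vector $w$ satisfies exactly the linear system cut out in that theorem. Once that is observed, the compactness--linearization pattern closes the argument.
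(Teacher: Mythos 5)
Your proof is correct, and it follows the same underlying idea as the paper: a family of Hadamard matrices accumulating at $H$ must produce a nonzero vector in $\widetilde{T}_HD_N$. The paper's own proof is a one-line version of this, asserting that the tangent vector to ``any one-parameter family $H^q$'' through $H$ would lie in $\widetilde{T}_HD_N$; this tacitly assumes that non-isolatedness yields a differentiable arc with nonvanishing derivative, which is not immediate (one would normally invoke something like the curve selection lemma for real algebraic sets). Your replacement of the arc by normalized secant directions $w_n=(H^{(n)}-H)/\varepsilon_n$, together with compactness of the unit sphere and the uniform $O(\varepsilon_n^2)$ Taylor remainder for the polynomial defining equations, produces a \emph{unit} vector $w$ annihilated by all the differentials, hence a genuinely nonzero element of $\widetilde{T}_HD_N$. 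Since $M_N(\mathbb T)$, $\sqrt{N}U_N$ and $M_N'(\mathbb C)$ are smooth with regular defining equations, the common kernel of those differentials is exactly $\widetilde{T}_HD_N$ as defined in the paper, so your contradiction with $d'(H)=0$ is complete; if anything, your argument is the more careful of the two.
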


\begin{proof}
Indeed, if the matrix $H\in D_N$ was not isolated, then the tangent vector to any one-parameter family $H^q$ would belong to the space $\widetilde{T}_HD_N=\emptyset$, contradiction.
\end{proof}

In general, it is not very clear what exact local information about $C_N$ is encoded by the defect. For a detailed discussion here, we refer to the recent article \cite{bbe}.

\section{Examples and comments}

We discuss in this section the various particular cases of Theorem 2.5. Let us first examine the product operations. We have here the following statement:

\begin{proposition}
The enveloping tangent spaces are as follows:
\begin{enumerate}
\item For equivalent matrices $H\sim H'$ we have $\widetilde{T}\simeq\widetilde{T}'$.

\item For tensor products $H=H'\otimes H''$ we have $\widetilde{T}'\otimes\widetilde{T}''\subset\widetilde{T}$.
\end{enumerate}
\end{proposition}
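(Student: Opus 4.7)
The plan is to apply directly the explicit description from Theorem 2.5, which presents $\widetilde{T}_HC_N$ as the intersection over pairs $i,j$ of the kernels of the $\mathbb{R}$-linear functionals
\[
\Phi^H_{ij}(A)=\sum_k H_{ik}\bar{H}_{jk}(A_{ik}-A_{jk}).
\]
Note that $\Phi^H_{ii}$ vanishes identically on every $A$, so we may freely include the diagonal indices without changing the kernel. Both assertions then become straightforward substitution checks.

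For part (1), I would verify invariance under each of the four generating moves of the equivalence relation. A row permutation $H'_{ij}=H_{\sigma(i),j}$ identifies $\Phi^{H'}_{ij}$ with $\Phi^H_{\sigma(i),\sigma(j)}$, so the map $A\mapsto A'$ defined by $A'_{ij}=A_{\sigma(i),j}$ provides an explicit isomorphism between the two kernels; column permutations are handled symmetrically. Multiplying row $i$ by $\lambda_i\in\mathbb T$ multiplies $\Phi^H_{ij}$ globally by the non-zero scalar $\lambda_i\bar\lambda_j$, and multiplying column $k$ by $\mu_k\in\mathbb T$ inserts the inert factor $|\mu_k|^2=1$ inside the sum; in both cases the kernel is preserved and we may take $A'=A$.

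For part (2), given $A'\in\widetilde{T}_{H'}C_N$ and $A''\in\widetilde{T}_{H''}C_M$, the natural candidate for their ``tensor'' tangent vector comes from differentiating the product of one-parameter families $H'_{ij}e^{itA'_{ij}}$ and $H''_{ab}e^{itA''_{ab}}$ at $t=0$, namely the matrix indexed by pairs and defined by
\[
(A'\otimes A'')_{ia,jb}=A'_{ij}+A''_{ab}.
\]
Plugging this into $\Phi^H_{ia,jb}$, the split $(A'_{ik}-A'_{jk})+(A''_{ac}-A''_{bc})$ and the factorization $H_{ia,kc}\bar{H}_{jb,kc}=H'_{ik}\bar{H}'_{jk}\cdot H''_{ac}\bar{H}''_{bc}$ reduce the sum to
\[
\Phi^H_{ia,jb}(A'\otimes A'')=M\delta_{ab}\,\Phi^{H'}_{ij}(A')+N\delta_{ij}\,\Phi^{H''}_{ab}(A''),
\]
using the Hadamard identities $\sum_c H''_{ac}\bar{H}''_{bc}=M\delta_{ab}$ and $\sum_k H'_{ik}\bar{H}'_{jk}=N\delta_{ij}$. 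Both summands vanish: in off-diagonal cases the Kronecker delta kills the term, and when it survives the remaining factor either equals zero by the hypothesis $A'\in\widetilde{T}',A''\in\widetilde{T}''$, or is $\Phi^{H'}_{ii}$ / $\Phi^{H''}_{aa}$, which is trivially zero.

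No substantial obstacle is anticipated; the only real piece of interpretive bookkeeping is recognizing that $\widetilde{T}'\otimes\widetilde{T}''$ refers to the \emph{additive} operation on tangent matrices dictated by the torus group law on $M_N(\mathbb T)$, rather than a literal algebraic tensor product of vector spaces.
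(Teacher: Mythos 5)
Your part (1) is correct and is essentially the paper's own (very terse) argument: each of the four generating moves of the equivalence relation carries the system of equations for $H$ onto the system for $H'$ up to a relabelling of indices and a nonzero scalar factor, so the solution spaces are isomorphic.

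Part (2), however, proves a different statement from the one intended. In the proposition, $\widetilde{T}'\otimes\widetilde{T}''$ is the literal tensor product of vector spaces, realized inside $M_{NM}(\mathbb R)$ by the Kronecker product $(A'\otimes A'')_{ia,kc}=A'_{ik}A''_{ac}$; this is what is needed for the consequence $d(H)\geq d(H')d(H'')$ in Proposition 3.2, since only a genuine tensor product has dimension $\dim\widetilde{T}'\cdot\dim\widetilde{T}''$. Your additive combination $A'_{ik}+A''_{ac}$ does lie in $\widetilde{T}$ --- the computation of $\Phi^H_{ia,jb}$ you give is correct --- but it spans a subspace of dimension at most $\dim\widetilde{T}'+\dim\widetilde{T}''$, so it cannot yield the multiplicativity estimate, and it is not what the statement asserts. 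The intended inclusion requires a slightly different, equally short, computation: for $A_{ia,kc}=A'_{ik}A''_{ac}$ one has $\sum_{kc}H_{ia,kc}\bar{H}_{jb,kc}A_{ia,kc}=\bigl(\sum_kH'_{ik}\bar{H}'_{jk}A'_{ik}\bigr)\bigl(\sum_cH''_{ac}\bar{H}''_{bc}A''_{ac}\bigr)$, and the membership conditions, rewritten as $\sum_kH'_{ik}\bar{H}'_{jk}A'_{ik}=\sum_kH'_{ik}\bar{H}'_{jk}A'_{jk}$ and likewise for $A''$, let you replace $i\to j$ and $a\to b$ in the two factors, giving $\sum_{kc}H_{ia,kc}\bar{H}_{jb,kc}A_{jb,kc}$ and hence the vanishing of the defining sums; linearity then handles general elements of the tensor product. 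Incidentally, your additive fact is a corollary of the multiplicative one, since $A'_{ik}+A''_{ac}=(A'\otimes J'')_{ia,kc}+(J'\otimes A'')_{ia,kc}$ where $J',J''$ are the all-ones matrices, which belong to $\widetilde{T}',\widetilde{T}''$ as they are of the form $a_i+b_j$.
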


\begin{proof}
These results, from \cite{tad}, \cite{tz2}, follow as well from Theorem 2.5:

(1) This is clear from definitions, because the permutations and the multiplication by scalars act as well, and in a compatible way, on the enveloping tangent spaces.

(2) Indeed, for a matrix $A=A'\otimes A''$ with $A'\in\widetilde{T}'$ and $A''\in\widetilde{T}''$, we have:
\begin{eqnarray*}
\sum_{kc}H_{ia,kc}\bar{H}_{jb,kc}A_{ia,kc}
&=&\sum_{kc}H'_{ik}H''_{ac}\bar{H}'_{jk}\bar{H}''_{bc}A'_{ik}A''_{ac}\\
&=&\sum_kH'_{ik}\bar{H}'_{jk}A'_{ik}\sum_cH''_{ac}\bar{H}''_{bc}A''_{ac}\\
&=&\sum_kH'_{ik}\bar{H}'_{jk}A'_{jk}\sum_cH''_{ac}\bar{H}''_{bc}A''_{bc}\\
&=&\sum_{kc}H'_{ik}H''_{ac}\bar{H}'_{jk}\bar{H}''_{bc}A'_{jk}A''_{bc}\\
&=&\sum_{kc}H_{ia,kc}\bar{H}_{jb,kc}A_{jb,kc}
\end{eqnarray*}

Thus we have indeed $A\in\widetilde{T}$, and we are done.
\end{proof}

In terms of the defect, we obtain:

\begin{proposition}
The undephased defect satisfies:
\begin{enumerate}
\item For equivalent matrices $H\sim H'$ we have $d(H)=d(H')$.

\item For tensor products $H=H'\otimes H''$ we have $d(H)\geq d(H')d(H'')$.
\end{enumerate}
\end{proposition}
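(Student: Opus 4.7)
The plan is to derive both statements by simply taking dimensions in Proposition 3.1, which already supplies the underlying isomorphisms and inclusions at the level of enveloping tangent spaces.

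For (1), Proposition 3.1(1) gives a linear isomorphism $\widetilde{T}_H C_N \simeq \widetilde{T}_{H'} C_N$ (induced by the permutation and unit-scalar action, which preserves both $M_N(\mathbb{T})$ and $\sqrt{N}U_N$, hence their intersection and the tangent description from Theorem 2.5). Taking $\dim$ on both sides yields $d(H)=d(H')$ with no further work.

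For (2), I would invoke Proposition 3.1(2), which supplies the inclusion of real vector subspaces $\widetilde{T}' \otimes \widetilde{T}'' \subset \widetilde{T}$ inside $M_{NM}(\mathbb{R})$, realized concretely by the map $(A',A'') \mapsto A' \otimes A''$. This map is $\mathbb{R}$-bilinear and factors through the algebraic tensor product. The only thing I need to check is that, as a map out of the algebraic tensor product $\widetilde{T}' \otimes \widetilde{T}''$, this factorization is injective, so that the subspace on the left really has dimension $d(H')d(H'')$. This is standard: if $\{A'_\alpha\}$ and $\{A''_\beta\}$ are bases of $\widetilde{T}'$ and $\widetilde{T}''$ respectively, then $\{A'_\alpha \otimes A''_\beta\}$ is linearly independent in $M_{NM}(\mathbb{R})$, because linear independence of matrix tensor products reduces to linear independence of matrix entries. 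Hence
\[
d(H) \;=\; \dim \widetilde{T} \;\geq\; \dim(\widetilde{T}' \otimes \widetilde{T}'') \;=\; d(H')\,d(H''),
\]
which is the claim.

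There is no genuine obstacle in this proof; both assertions are immediate dimensional shadows of Proposition 3.1, and the only step deserving a line of justification is the elementary observation that the matrix tensor product of linearly independent families is linearly independent, so that the inclusion in (2) really contributes the full product of dimensions.
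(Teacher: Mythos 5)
Your proof is correct and follows the same route as the paper, which likewise derives both assertions by taking dimensions in Proposition 3.1. The only addition is your explicit check that the matrix tensor products of basis elements remain linearly independent, a point the paper leaves implicit.
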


\begin{proof}
These results, once again from \cite{tad}, \cite{tz2}, follow as well from Proposition 3.1.
\end{proof}

Observe that we don't have equality in the tensor product estimate, even in very simple cases. For instance if we consider two Fourier matrices $F_2$, we obtain:
$$d(F_2\otimes F_2)=10>9=d(F_2)^2$$

Here the number $10$ comes from the general formula of $d(F_G)$ explained in section 5 below, in the particular case $G=\mathbb Z_2\times\mathbb Z_2$, or from Proposition 3.5 below, at $q=1$.

The problem of finding upper bounds for the defect of a tensor product was investigated in detail by Tadej in \cite{tad}. We would like to raise here the following related question:

\begin{problem}
How does the defect of a deformed tensor product, $d(H\otimes_LK)$, vary with the matrix of parameters $L\in M_{M\times N}(\mathbb T)$?
\end{problem}

This problem looks quite complicated, even at small values of $M,N$. As a first observation, the equations of the enveloping tangent space are:
$$\sum_{kc}L_{ak}\bar{L}_{bk}H_{ik}\bar{H}_{jk}K_{ac}\bar{K}_{bc}(A_{ia,kc}-A_{jb,kc})=0$$

There is no obvious trick that can be applied here. Note that there is no reason for a ``transport formula'' of type $d(H\otimes_LK)=d(H\otimes K)$ to hold, even in simple cases. Indeed, the $L$-deformation procedure ``deforms well'' the matrix $H\otimes K$, but maybe not the other $NM\times NM$ complex Hadamard matrices, which might happen to be around. 

However, there are several special cases where the problem can be solved, for instance by combining the formulae for Fourier matrices with the following observation:

\begin{proposition}
We have $F_{NM}\simeq F_N\otimes_LF_M$, where $L_{aj}=w^{aj}$, with $w=e^{2\pi i/NM}$.
\end{proposition}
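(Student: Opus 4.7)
The plan is to write out both matrices in exponential form and exhibit explicit (asymmetric) index bijections on rows and columns that turn one into the other. No rescaling of rows or columns will be needed; only reshuffling of indices.

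First, with $w = e^{2\pi i /(NM)}$, the Fourier matrix is simply $(F_{NM})_{IJ} = w^{IJ}$ for $I,J \in \{0,1,\ldots,NM-1\}$. On the other hand, writing $(F_N)_{ij} = w^{Mij}$ and $(F_M)_{ab} = w^{Nab}$, the deformed tensor product from Definition 1.4, together with $L_{aj} = w^{aj}$, gives:
$$(F_N \otimes_L F_M)_{ia,jb} \;=\; (F_N)_{ij}\, L_{aj}\, (F_M)_{ab} \;=\; w^{Mij+aj+Nab}.$$

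Next I identify row indices via $I = Mi + a$ and column indices via $J = j + Nb$, where $i \in \{0,\ldots,N-1\}$ and $a \in \{0,\ldots,M-1\}$ (and similarly for $j,b$). Both maps are bijections $\{0,\ldots,N-1\}\times\{0,\ldots,M-1\} \to \{0,\ldots,NM-1\}$, hence amount to permuting rows and columns. Expanding and using $w^{NM}=1$:
$$IJ \;=\; (Mi+a)(j+Nb) \;=\; Mij + MNib + aj + Nab \;\equiv\; Mij + aj + Nab \pmod{NM}.$$
Therefore $(F_{NM})_{IJ} = w^{IJ} = w^{Mij+aj+Nab} = (F_N \otimes_L F_M)_{ia,jb}$, which is exactly the asserted equivalence.

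The only subtle point — and really the only place where care is needed — is the choice of \emph{different} bijections on rows and columns ($I = Mi+a$ versus $J = j+Nb$). Using the symmetric identification on both sides would produce an unwanted cross term $w^{Mib}$ that is not absorbed into the entries of $F_N \otimes_L F_M$; the asymmetric choice pushes the cross term into the exponent $MNib$, which vanishes modulo $NM$. So the proof amounts to this single observation, together with the routine exponential bookkeeping above.
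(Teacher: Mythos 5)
Your proof is correct and follows essentially the same route as the paper's: write everything in powers of $w=e^{2\pi i/NM}$ and exhibit the asymmetric row/column re-indexings that identify the deformed tensor product with $F_{NM}$ up to permutation of rows and columns. In fact your exponent identity $(Mi+a)(j+Nb)\equiv Mij+aj+Nab \pmod{NM}$ is the corrected form of the paper's displayed factorization (which reads $(Ni+a)(j+Nb)$, evidently a typo), and your closing remark about why the symmetric identification fails is a worthwhile clarification.
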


\begin{proof}
Indeed, by using $w^{NM}=1$, we obtain the following formula:
\begin{eqnarray*}
(F_N\otimes_LF_M)_{ia,jb}
&=&(F_N)_{ij}L_{aj}(F_M)_{ab}\\
&=&w^{Mij+aj+Nab}\\
&=&w^{(Ni+a)(j+Nb)}\\
&=&(F'_{NM})_{ia,jb}
\end{eqnarray*}

Here $F'_{NM}$ is a certain matrix which is equivalent to $F_{NM}$, and we are done.
\end{proof}

Let us discuss now the simplest case of the problem, $N=M=2$. We will work out everything in detail, as an illustration for how the equations in Theorem 2.5 work.

\begin{proposition}
We have $d(F_{2,2}^q)=10$ at $q=\pm1$, and $d(F_{2,2}^q)=8$ at $q\neq\pm1$.
\end{proposition}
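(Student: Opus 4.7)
The plan is to apply Theorem 2.5 directly. First I would invoke Proposition 2.7 to reduce the problem to the dephased defect: it suffices to show that $d'(F_{2,2}^q)=3$ when $q=\pm 1$ and $d'(F_{2,2}^q)=1$ otherwise, and then add $2N-1=7$. Setting $A_{i0}=A_{0j}=0$ leaves $9$ real unknowns $A_{ij}$ with $i,j\in\{1,2,3\}$.

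Next I would compute the six row products $P_{ij}(k)=H_{ik}\bar{H}_{jk}$. Using $|q|=1$, one finds
$$P_{01}=P_{23}=(1,-1,\bar{q},-\bar{q}),\quad P_{02}=P_{13}=(1,1,-1,-1),\quad P_{03}=\overline{P_{12}}=(1,-1,-\bar{q},\bar{q}).$$
The crucial observation is that $P_{02}$ and $P_{13}$ are real for every $q$, while the remaining four vectors lie in the $\mathbb R$-span of $\{1,\bar{q}\}$, which is genuinely two-dimensional precisely when $q\neq\pm 1$.

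For generic $q$, each of the four ``non-real'' equations $(0,1),(0,3),(1,2),(2,3)$ splits into two real equations — the vanishing of the coefficients of $1$ and of $\bar{q}$ separately — producing $4\cdot 2+2=10$ real linear equations. Solving in stages, $(0,1)$ and $(0,3)$ force $A_{11}=A_{31}=0$, $A_{12}=A_{13}$, $A_{32}=A_{33}$; equation $(1,2)$ then yields $A_{21}=A_{11}=0$ together with $A_{22}=A_{23}$, whereupon $(0,2)$ collapses to $2A_{22}=0$; finally $(1,3)$ gives $A_{12}=A_{32}$. Only the single parameter $A_{12}$ remains free, so $d'=1$ and $d=8$.

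For $q=\pm 1$ all $P_{ij}$ are real, leaving only $6$ scalar equations. I would use $(0,1),(0,2),(0,3)$ to eliminate $A_{11},A_{21},A_{31}$ and substitute into the remaining three; a direct simplification produces the additional relations $A_{22}=A_{11}$, $A_{32}=A_{13}$, $A_{33}=A_{32}+A_{23}$, leaving $A_{12},A_{13},A_{23}$ as free parameters. Hence $d'=3$ and $d=10$, consistent with the equivalence $F_{2,2}^{\pm 1}\simeq F_2\otimes F_2$ of Proposition 1.5. The only conceptual content is the rank jump caused by the loss of $\mathbb R$-linear independence of $\{1,\bar{q}\}$ at $q=\pm 1$; beyond that the argument is mechanical bookkeeping, and I anticipate no serious obstacle.
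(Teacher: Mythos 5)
Your proof is correct and follows essentially the same route as the paper: both write down the six scalar-product equations from Theorem 2.5, exploit the $\mathbb{R}$-linear independence of $\{1,\bar q\}$ for $q\neq\pm 1$ to split the four $q$-dependent equations into real and imaginary parts, and finish by elementary elimination (the paper dephases only in the $q=\pm 1$ case, where it also reduces $q=-1$ to $q=1$ by equivalence, while you dephase throughout --- a cosmetic difference, though note your explicit relations $A_{22}=A_{11}$, $A_{32}=A_{13}$, $A_{33}=A_{32}+A_{23}$ are the $q=1$ ones, with $q=-1$ covered by the equivalence you cite). The conclusions $d'=1$, $d=8$ for generic $q$ and $d'=3$, $d=10$ at $q=\pm 1$ agree with the paper's.
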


\begin{proof}
Our starting point are the equations in Theorem 2.5, namely:
$$\sum_hH_{ik}\bar{H}_{jk}(A_{ik}-A_{jk})=0$$

Since the $i>j$ equations are equivalent to the $i<j$ ones, and the $i=j$ equations are trivial, we just have to write down the equations corresponding to indices $i<j$. And, with $ij=01,02,03,12,13,23$, these equations are:
\begin{eqnarray*}
(A_{00}-A_{10})-(A_{01}-A_{11})+\bar{q}(A_{02}-A_{12})-\bar{q}(A_{03}-A_{13})&=&0\\
(A_{00}-A_{20})+(A_{01}-A_{21})-(A_{02}-A_{22})-(A_{03}-A_{23})&=&0\\
(A_{00}-A_{30})-(A_{01}-A_{31})-\bar{q}(A_{02}-A_{32})+\bar{q}(A_{03}-A_{33})&=&0\\
(A_{10}-A_{20})-(A_{11}-A_{21})-q(A_{12}-A_{22})+q(A_{13}-A_{23})&=&0\\
(A_{10}-A_{30})+(A_{11}-A_{31})-(A_{12}-A_{32})-(A_{13}-A_{33})&=&0\\
(A_{20}-A_{30})-(A_{21}-A_{31})+\bar{q}(A_{22}-A_{32})-\bar{q}(A_{23}-A_{33})&=&0
\end{eqnarray*}

Assume first $q\neq\pm 1$. Then $q$ is not real, and appears in 4 of the above equations. But these 4 equations can be written in the following way:
\begin{eqnarray*}
(A_{00}-A_{01})-(A_{10}-A_{11})+\bar{q}((A_{02}-A_{03})-(A_{12}-A_{13}))&=&0\\
(A_{00}-A_{01})-(A_{30}-A_{31})-\bar{q}((A_{02}-A_{03})-(A_{32}-A_{33}))&=&0\\
(A_{10}-A_{11})-(A_{20}-A_{21})-q((A_{12}-A_{13})-(A_{22}-A_{23}))&=&0\\
(A_{20}-A_{21})-(A_{30}-A_{31})+\bar{q}((A_{22}-A_{23})-(A_{32}-A_{33}))&=&0
\end{eqnarray*}

Now since the unknowns are real, and $q$ is not, we conclude that the terms between braces in the left part must be all equal, and that the same must happen at right:
\begin{eqnarray*}
A_{00}-A_{01}&=&A_{10}-A_{11}=A_{20}-A_{21}=A_{30}-A_{31}\\
A_{02}-A_{03}&=&A_{12}-A_{13}=A_{22}-A_{23}=A_{32}-A_{33}
\end{eqnarray*}

Thus, the equations involving $q$ tell us that $A$ must be of the following form:
$$A=\begin{pmatrix}
a&a+x&e+y&e\\
b&b+x&f+y&f\\
c&c+x&g+y&g\\
d&d+x&h+y&h\end{pmatrix}$$

Let us plug now these values in the remaining 2 equations. We obtain:
\begin{eqnarray*}
a-c+a+x-c-x-e-y+g+y-e+g&=&0\\
b-d+b+x-d-x-f-y+h+y-f+h&=&0
\end{eqnarray*}

Thus we must have $a+g=c+e$ and $b+h=d+f$, which are independent conditions. We conclude that the dimension of the space of solutions is $10-2=8$, as claimed.

Assume now $q=\pm 1$. For simplicity we set $q=1$, by using  Proposition 3.1, and we use as well Proposition 2.7, which tells us that it is enough to compute the dephased defect. The dephased equations, obtained by setting $A_{i0}=A_{0j}=0$ in our system, are:
\begin{eqnarray*}
A_{11}-A_{12}+A_{13}&=&0\\
-A_{21}+A_{22}+A_{23}&=&0\\
A_{31}+A_{32}-A_{33}&=&0\\
-A_{11}+A_{21}-A_{12}+A_{22}+A_{13}-A_{23}&=&0\\
A_{11}-A_{31}-A_{12}+A_{32}-A_{13}+A_{33}&=&0\\
-A_{21}+A_{31}+A_{22}-A_{32}-A_{23}+A_{33}&=&0
\end{eqnarray*}

The first three equations tell us that our matrix must be of the following form:
$$A=\begin{pmatrix}a&a+b&b\\ c+d&c&d\\ e&f&e+f\end{pmatrix}$$

Now by plugging these values in the last three equations, these become:
\begin{eqnarray*}
-a+c+d-a-b+c+b-d&=&0\\
a-e-a-b+f-b+e+f&=&0\\
-c-d+e+c-f-d+e+f&=&0
\end{eqnarray*}

Thus we must have $a=c$, $b=f$, $d=e$, and since these conditions are independent, the dephased defect is 3, and so the undephased defect is $3+7=10$, as claimed.
\end{proof}

Let us discuss now the reformulation of Theorem 2.5, for certain special classes of complex Hadamard matrices. The simplest situation is that when we have a usual Hadamard matrix, $H\in M_N(\pm 1)$. The combinatorics of $H$ is encoded into the array $\varepsilon\in M_{N\times N\times N}(\pm 1)$ given by $\varepsilon_{ijk}=H_{ik}H_{jk}$, that we will call ``design'' of $H$. See \cite{aga}, \cite{hor}.

\begin{proposition}
In the real case $H\in M_N(\pm 1)$ the system of equations for $\widetilde{T}$ is
$$\sum_k\varepsilon_{ijk}(A_{ik}-A_{jk})=0$$
where $\varepsilon\in M_{N\times N\times N}(\pm 1)$, given by $\varepsilon_{ijk}=H_{ik}H_{jk}$, is the ``design array'' of $H$.
\end{proposition}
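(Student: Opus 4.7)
The plan is to obtain this proposition as a direct specialization of Theorem 2.5. That theorem characterizes $\widetilde{T}_HC_N$ as the space of real matrices $A \in M_N(\mathbb{R})$ satisfying
$$\sum_k H_{ik}\bar{H}_{jk}(A_{ik} - A_{jk}) = 0$$
for all $i,j$. When $H \in M_N(\pm 1)$, complex conjugation acts trivially on the entries of $H$, so $\bar{H}_{jk} = H_{jk}$ and the coefficient $H_{ik}\bar{H}_{jk}$ collapses to $H_{ik}H_{jk}$, which is precisely $\varepsilon_{ijk}$ by the definition of the design array.

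Substituting this into the equations of Theorem 2.5 immediately yields the claimed form
$$\sum_k \varepsilon_{ijk}(A_{ik} - A_{jk}) = 0.$$
There is no real obstacle: the entire content of the statement is the observation that for $\pm 1$-valued Hadamard matrices the coefficient system of Theorem 2.5 becomes a real linear system whose coefficients are the $\pm 1$-entries of the three-dimensional design array $\varepsilon$. One may note in passing that $\varepsilon_{iik} = 1$ and $\varepsilon_{ijk} = \varepsilon_{jik}$, which reflects the fact that the $i = j$ equations are trivial and the $i > j$ equations are equivalent to the $i < j$ ones, so that the independent relations are indexed by unordered pairs $\{i,j\}$; but these symmetries are inherited directly from the general case and are not needed to establish the reformulation.
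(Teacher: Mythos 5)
Your proposal is correct and is exactly the observation the paper intends: the paper's proof is simply ``clear from definitions,'' and your substitution $\bar{H}_{jk}=H_{jk}$ into the equations of Theorem 2.5 spells out precisely that observation. Nothing further is needed.
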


\begin{proof}
This is just an observation, clear from definitions.
\end{proof}

It is not clear what the spectral properties of $\varepsilon$ are, and how the above formulation can be improved. Let us also mention that of particular interest would be an extension of the above observation to the root of unity case \cite{but}. However, as shown by Lam and Leung in \cite{lle}, the ``design'' of a Butson matrix can be something of a very complicated nature, unless the ``regularity conjecture'' in \cite{bbs} holds indeed. One way of overcoming these problems would be by looking directly at the matrices which are regular in the sense of \cite{bbs}. But the corresponding ``design matrices'' are not axiomatized yet.

Let us discuss as well the circulant case. Here our matrices must be ``Fourier-diagonal'', of type $H=\sqrt{N}FQF^*$, with $F=F_N/\sqrt{N}$ and with $Q$ diagonal over $\mathbb T$. The problem of finding the diagonal vectors $Q\in\mathbb T^N$ having the property that $H=\sqrt{N}FQF^*$ is indeed Hadamard is a subtle Fourier analysis problem, cf. \cite{bnz}, \cite{bjo}, \cite{bfr}, \cite{ha2}.

In Fourier notation, our system of equations is as follows:

\begin{proposition}
In the circulant case the equations for $\widetilde{T}$ are
$$\sum_{klr}w^{k(l-r)-il+jr}Q_l\bar{Q}_r(A_{ik}-A_{jk})=0$$
where $Q\in\mathbb T^N$ is the eigenvalue vector of $U=H/\sqrt{N}$.
\end{proposition}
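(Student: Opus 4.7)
The plan is to substitute the Fourier diagonalization $H=\sqrt{N}FQF^*$ directly into the system from Theorem 2.5, namely $\sum_k H_{ik}\bar{H}_{jk}(A_{ik}-A_{jk})=0$. Using $F_{ab}=w^{ab}/\sqrt{N}$, and hence $(F^*)_{lk}=w^{-lk}/\sqrt{N}$, I would first compute the individual factors
$$H_{ik}=\sqrt{N}\sum_l F_{il}Q_l(F^*)_{lk}=\frac{1}{\sqrt{N}}\sum_l w^{l(i-k)}Q_l,\qquad \bar{H}_{jk}=\frac{1}{\sqrt{N}}\sum_r w^{r(k-j)}\bar{Q}_r,$$
and then multiply them to obtain the key identity
$$H_{ik}\bar{H}_{jk}=\frac{1}{N}\sum_{l,r}w^{k(r-l)+il-jr}Q_l\bar{Q}_r.$$

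Next I would insert this into the Theorem 2.5 equations and clear the overall factor $1/N$. This yields a system in exactly the form claimed, except that the phase appears as $w^{k(r-l)+il-jr}$ rather than $w^{k(l-r)-il+jr}$. These two phases are complex conjugates of one another, so the derived equation for each pair $(i,j)$ is the complex conjugate of the equation in the statement. Since any equation $z=0$ on the real unknowns $A_{ik}$ is equivalent to $\bar{z}=0$, the two systems cut out the same real subspace of $M_N(\mathbb R)$, which is what we want.

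There is no real obstacle here: the proof is essentially a direct substitution. The only point to watch is the Fourier sign convention — depending on whether one takes $F_{ab}=w^{ab}/\sqrt{N}$ or $w^{-ab}/\sqrt{N}$, the exponent comes out with one sign or the other. This ambiguity is harmless, because the Theorem 2.5 system is already closed under complex conjugation: the equation indexed by $(i,j)$ is minus the conjugate of that for $(j,i)$, so the overall solution space is independent of the choice.
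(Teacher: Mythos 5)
Your substitution is essentially the paper's own proof: the paper writes $H_{ij}=C_{j-i}$ with $C=FQ$ and expands the Theorem 2.5 system, which is the same computation you perform via $H=\sqrt{N}FQF^*$, and your expansion is correct. The only weak point is the final step, where you reconcile your phase $w^{k(r-l)+il-jr}$ with the stated $w^{k(l-r)-il+jr}$ by conjugation. Your $(i,j)$ equation is not the complex conjugate of the stated $(i,j)$ equation: conjugating $\sum_{klr}w^{k(l-r)-il+jr}Q_l\bar{Q}_r(A_{ik}-A_{jk})=0$ also turns $Q_l\bar{Q}_r$ into $\bar{Q}_lQ_r$, and after relabelling $l\leftrightarrow r$ one lands on (minus) the $(j,i)$ equation of the same system --- consistent with the general structure of Theorem 2.5, but not equal to your equation. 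What actually relates the two formulas is the substitution $l\mapsto-l$, $r\mapsto-r$, i.e.\ replacing $Q_l$ by $Q_{-l}$: your convention $H=\sqrt{N}FQF^*$ with $F_{ab}=w^{ab}/\sqrt{N}$ makes $H_{ij}$ a function of $i-j$, while the paper's proof takes $H_{ij}=C_{j-i}$, and these two circulant conventions give transposed matrices, $H^T=PHP$ with $P$ the flip $i\mapsto-i$. For a fixed $Q$ the two systems therefore describe the enveloping tangent spaces of two equivalent but generally distinct matrices, identified by $A\mapsto(A_{-i,-j})$ rather than equal as subspaces of $M_N(\mathbb R)$, so the claim that ``the two systems cut out the same real subspace'' is not quite right. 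Since the proposition is a pure reformulation that is never used later (and the paper's phrase ``eigenvalue vector'' already glosses over the $Q_l$ versus $Q_{-l}$ ambiguity), the discrepancy is indeed harmless --- but the correct justification is the index flip, not conjugation.
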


\begin{proof}
First, with $H_{ij}=C_{j-i}$, the system of equations for $\widetilde{T}$ becomes:
$$\sum_kC_{k-i}\bar{C}_{k-j}(A_{ik}-A_{jk})=0$$

Now since we have $C=FQ$, where $Q\in\mathbb T^N$ is the eigenvalue vector, we get:
$$\sum_{klr}w^{(k-i)l}Q_lw^{-(k-j)r}\bar{Q}_r(A_{ik}-A_{jk})=0$$

By rearranging the terms, this gives the formula in the statement.
\end{proof}

Once again, there is no obvious trick that can be applied to the equations. The circulant problem seems to require a delicate case-by-case analysis, and we have no results.

We would like to end this section with a few theoretical observations, regarding the general case. First, we have the following linear algebra interpretation of the defect:

\begin{proposition}
The undephased defect $d(H)$ is the corank of the matrix
$$Y_{ij,ab}
=(\delta_{ia}-\delta_{ja})\begin{cases}
Re(H_{ib}\bar{H}_{jb})&i<j\\ 
Im(H_{ib}\bar{H}_{jb})&i>j\\
*&i=j
\end{cases}$$
where $*$ can be any quantity (its coefficient being $0$ anyway).
\end{proposition}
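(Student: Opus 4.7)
The plan is to reinterpret the system from Theorem 2.5 as a homogeneous linear system $YA=0$ and identify the coefficient matrix.

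First, I would start from the defining equations
$$\sum_k H_{ik}\bar{H}_{jk}(A_{ik}-A_{jk})=0,\qquad \forall i,j,$$
noting that these are complex equations in real unknowns $A\in M_N(\mathbb{R})$, so each splits into a real part and an imaginary part. The $i=j$ equation reduces to $0=0$ and may be discarded. Moreover, swapping $i\leftrightarrow j$ conjugates the coefficient $H_{ik}\bar{H}_{jk}$ while negating the factor $A_{ik}-A_{jk}$. Hence the real part of the $(i,j)$ equation coincides with the real part of the $(j,i)$ equation, while the imaginary parts are opposite. It is therefore enough to retain the real part for $i<j$ and the imaginary part for $i>j$, giving exactly $N(N-1)$ real linear equations in the $N^2$ real unknowns $A_{ab}$.

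Next I would read off the coefficient of each unknown $A_{ab}$. In the sum $\sum_k H_{ik}\bar{H}_{jk}(A_{ik}-A_{jk})$, the variable $A_{ab}$ appears only when $k=b$, and then with factor $+H_{ib}\bar{H}_{jb}$ if $a=i$, with factor $-H_{ib}\bar{H}_{jb}$ if $a=j$, and not at all otherwise. Thus the coefficient of $A_{ab}$ in the $(i,j)$-equation is
$$(\delta_{ia}-\delta_{ja})\,H_{ib}\bar{H}_{jb}.$$
Taking real parts for $i<j$ and imaginary parts for $i>j$ gives precisely the entry $Y_{ij,ab}$ from the statement. Observe that for $i=j$ the prefactor $\delta_{ia}-\delta_{ja}$ vanishes identically, so the arbitrary value $*$ on the diagonal rows of $Y$ is irrelevant and the corresponding rows are zero.

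Consequently, under the canonical identification $M_N(\mathbb{R})\simeq\mathbb{R}^{N^2}$ via $A\mapsto(A_{ab})$, Theorem 2.5 gives
$$\widetilde{T}_HC_N=\ker Y.$$
Taking dimensions yields
$$d(H)=\dim\ker Y=N^2-\operatorname{rank}(Y)=\operatorname{corank}(Y),$$
which is the claim. The only delicate point is the bookkeeping that separates the real and imaginary halves of the $(i,j)$ equations into the $i<j$ and $i>j$ blocks of $Y$, and the verification that no information is lost in this split; everything else is a direct transcription of Theorem 2.5.
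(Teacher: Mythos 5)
Your proof is correct and follows essentially the same route as the paper: read off the coefficient matrix $(\delta_{ia}-\delta_{ja})H_{ib}\bar{H}_{jb}$, split each complex equation into real and imaginary parts, and use the $(i,j)\leftrightarrow(j,i)$ redundancy to keep only the real part for $i<j$ and the imaginary part for $i>j$. One minor slip: since the $(j,i)$ left-hand side equals minus the conjugate of the $(i,j)$ one, it is the real parts that are opposite and the imaginary parts that coincide, not the reverse --- but as homogeneous equations this changes nothing, and your conclusion stands.
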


\begin{proof}
The matrix of the system defining the enveloping tangent space is:
$$X_{ij,ab}=(\delta_{ia}-\delta_{ja})H_{ib}\bar{H}_{jb}$$

However, since we are only looking for real solutions $A\in M_N(\mathbb R)$, we have to take into account the real and imaginary parts. But this is not a problem, because the $(ij)$ equation coincides with the $(ji)$ one, that we can cut. More precisely, if we set $Y$ as above, then we obtain precisely the original system. Thus the defect of $H$ is indeed the corank of $Y$. 
\end{proof}

As an illustration, for the Fourier matrix $F_N$ we have the following formula, where $e(i,j)\in\{-1,0,1\}$ is negative if $i<j$, null for $i=j$, and positive for $i>j$:
$$Y_{ij,ab}=\frac{1}{2}(\delta_{ia}-\delta_{ja})(w^{(i-j)b}+e(i,j)w^{(j-i)b})$$

Observe in particular that for the Fourier matrix $F_2$ we have:
$$Y=\begin{pmatrix}0&0&0&0\\ 1&-1&-1&1\\ -1&1&1&-1\\ 0&0&0&0\end{pmatrix}$$

Here the corank is $3$, but, unfortunately, this cannot be seen on the characteristic polynomial, which is $P(\lambda)=\lambda^4$. The problem is that our matrix, and more precisely its middle $2\times 2$ block, is not diagonalizable. This phenomenon seems to hold in general.

Finally, we have as well the following key question:

\begin{question}
Does the profile matrix of $H$, namely
$$M_{ia}^{jb}=\sum_kH_{ik}\bar{H}_{jk}\bar{H}_{ak}H_{bk}$$
determine the enveloping tangent space $\widetilde{T}_HC_N$?
\end{question}

The point here is that the profile matrix $M\in M_{N^2}(\mathbb C)$ is the one which produces the subfactor, planar algebra, or quantum permutation group associated to $H\in M_N(\mathbb T)$. Thus, the above question is of great importance for understanding the relation between the ``geometric'' and ``quantum'' invariants of the complex Hadamard matrices.

An idea for parametrizing the system in terms of the profile matrix $M$ might come from the solution for the Fourier matrix, explained in section 4 below. Indeed, if we write $P=AH/N$, so that $A=PH^*$, then our equations become:
$$\sum_k\bar{H}_{ks}H_{ik}\bar{H}_{jk}\sum_s(P_{is}-P_{js})=0$$
$$\sum_s\bar{H}_{js}P_{is}=\sum_sH_{j,-s}\bar{P}_{i,-s}$$

Observe the similarity between the matrix on the top left and $M$. However, this does not solve our problem. We will be back to our problem in section 6 below.

\section{Abstract Fourier matrices}

We recall that the tensor products of Fourier matrices are exactly the Fourier matrices of the finite abelian groups. More precisely, with $G=\mathbb Z_{N_1}\times\ldots\times\mathbb Z_{N_r}$, we have:
$$F_G=F_{N_1}\otimes\ldots\otimes F_{N_r}$$

The defect of $F_N$ was computed by Tadej and \.Zyczkowski in \cite{tz2}. The first part of their proof works in fact for abstract Fourier matrices as well, and we have:

\begin{proposition}
For a Fourier matrix $F=F_G$, the matrices $A\in\widetilde{T}_FC_N$, with $N=|G|$, are those of the form $A=PF^*$, with $P\in M_N(\mathbb C)$ satisfying
$$P_{ij}=P_{i+j,j}=\bar{P}_{i,-j}$$
where the indices $i,j$ are by definition taken in the group $G$.
\end{proposition}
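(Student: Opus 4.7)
The plan is to set $A=PF^*$, a bijective change of variables since $FF^*=NI_N$, and translate both conditions defining $\widetilde{T}_FC_N$ — the orthogonality system from Theorem 2.5 and the reality condition $A\in M_N(\mathbb R)$ — into conditions on the complex matrix $P$.

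For the orthogonality equations $\sum_k F_{ik}\bar F_{jk}(A_{ik}-A_{jk})=0$, the decisive fact specific to abstract Fourier matrices is the bicharacter identity $F_{ik}\bar F_{jk}=F_{i-j,k}$, valid because $F_G$ encodes the characters of the abelian group $G$. Substituting $A_{ik}=\sum_l P_{il}\bar F_{kl}$ and using the symmetry $F_{kl}=F_{lk}$, the $k$-sum $\sum_k F_{i-j,k}\bar F_{lk}=N\delta_{i-j,l}$ collapses, producing $NP_{i,i-j}$ from the $A_{ik}$-piece and, symmetrically, $NP_{j,i-j}$ from the $A_{jk}$-piece. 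The equation therefore reduces to $P_{i,i-j}=P_{j,i-j}$; setting $a=i-j$ and relabeling yields the first required relation $P_{i+j,j}=P_{ij}$.

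For the reality constraint $A=\bar A$, expand $\bar A_{ij}=\sum_l\bar P_{il}F_{jl}$, and apply the conjugation identity $F_{jl}=\bar F_{j,-l}$ (which comes from $\overline{\chi_l}=\chi_{-l}$ in $G$). After the change of dummy index $l\mapsto -l$ this becomes $\bar A_{ij}=\sum_l\bar P_{i,-l}\bar F_{jl}$. Comparing with $A_{ij}=\sum_l P_{il}\bar F_{jl}$ and using the invertibility of $\bar F$ forces $P_{ij}=\bar P_{i,-j}$ for all indices, which is the second required relation.

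The computation is reversible throughout, so conversely any $P$ satisfying both conditions produces, via $A=PF^*$, a real matrix in $\widetilde{T}_FC_N$. The only real obstacle is bookkeeping: tracking carefully where the abelian-group structure of $G$ enters (multiplicativity and symmetry of the bicharacter, and $\overline{\chi_l}=\chi_{-l}$), and handling the change of variables $A\leftrightarrow P$ in both directions without losing indices.
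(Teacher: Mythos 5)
Your proposal is correct and follows essentially the same route as the paper: both arguments rest on the bicharacter identity $F_{ik}\bar F_{jk}=F_{i-j,k}$ to turn the orthogonality system into $P_{i,i-j}=P_{j,i-j}$ (the paper reads this as $(AF)_{i,i-j}=(AF)_{j,i-j}$ with $P=AF$, which is your substitution $A=PF^*$ up to the harmless factor $N$), and on $F_{jl}=\bar F_{j,-l}$ to turn reality of $A$ into $P_{ij}=\bar P_{i,-j}$. No gaps; the reversibility remark at the end covers the converse direction just as the paper's equivalences do.
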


\begin{proof}
We use the system of equations in Theorem 2.5, namely:
$$\sum_kF_{ik}\bar{F}_{jk}(A_{ik}-A_{jk})=0$$

Now assume $F=F_{N_1}\otimes\ldots\otimes F_{N_r}$, so that with $w_k=e^{2\pi i/k}$ we have:
$$F_{i_1\ldots i_r,j_1\ldots j_r}=(w_{N_1})^{i_1j_1}\ldots (w_{N_r})^{i_rj_r}$$

With $N=N_1\ldots N_r$ and $w=e^{2\pi i/N}$, we obtain:
$$F_{i_1\ldots i_r,j_1\ldots j_r}=w^{\left(\frac{i_1j_1}{N_1}+\ldots+\frac{i_rj_r}{N_r}\right)N}$$

Thus the matrix of our system is given by:
$$F_{i_1\ldots i_r,k_1\ldots k_r}\bar{F}_{j_1\ldots j_r,k_1\ldots k_r}=w^{\left(\frac{(i_1-j_1)k_1}{N_1}+\ldots+\frac{(i_r-j_r)k_r}{N_r}\right)N}$$

Now by plugging in a multi-indexed matrix $A$, our system becomes:
$$\sum_{k_1\ldots k_r}w^{\left(\frac{(i_1-j_1)k_1}{N_1}+\ldots+\frac{(i_r-j_r)k_r}{N_r}\right)N}(A_{i_1\ldots i_r,k_1\ldots k_r}-A_{j_1\ldots j_r,k_1\ldots k_r})=0$$

Now observe that in the above formula we have in fact two matrix multipliations, so our system can be simply written as:
$$(AF)_{i_1\ldots i_r,i_1-j_1\ldots i_r-j_r}-(AF)_{j_1\ldots j_r,i_1-j_1\ldots i_r-j_r}=0$$

Now recall that our indices always have a ``cyclic'' meaning, so they belong in fact to the group $G$. So, with $P=AF$, and by using multi-indices, our system is simply:
$$P_{i,i-j}=P_{j,i-j}$$

With $i=I+J,j=I$ we obtain the condition $P_{I+J,J}=P_{IJ}$ in the statement. 

In addition, $A=PF^*$ must be a real matrix. But, if we set $\tilde{P}_{ij}=\bar{P}_{i,-j}$, we have:
\begin{eqnarray*}
\overline{(PF^*)}_{i_1\ldots i_r,j_1\ldots j_r}
&=&\sum_{k_1\ldots k_r}\bar{P}_{i_1\ldots i_r,k_1\ldots k_r}F_{j_1\ldots j_r,k_1\ldots k_r}\\
&=&\sum_{k_1\ldots k_r}\tilde{P}_{i_1\ldots i_r,-k_1\ldots -k_r}(F^*)_{-k_1\ldots -k_r,j_1\ldots j_r}\\
&=&(\tilde{P}F^*)_{i_1\ldots i_r,j_1\ldots j_r}
\end{eqnarray*}

Thus we have $\overline{PF^*}=\tilde{P}F^*$, so the fact that the matrix $PF^*$ is real, which means by definition that we have $\overline{PF^*}=PF^*$, can be reformulated as $\tilde{P}F^*=PF^*$, and hence as $\tilde{P}=P$. So, we obtain the conditions $P_{ij}=\bar{P}_{i,-j}$ in the statement.
\end{proof}

\begin{theorem}
The undephased defect of an abstract Fourier matrix $F_G$ is given by:
$$d(F_G)=\sum_{g\in G}\frac{|G|}{ord(g)}$$
\end{theorem}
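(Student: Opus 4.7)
The plan is to build directly on Proposition 4.1, which reduces the computation of $\widetilde{T}_{F_G}C_N$ to describing the space of matrices $P\in M_N(\mathbb C)$ satisfying
\begin{equation*}
P_{ij}=P_{i+j,j}\quad\text{and}\quad P_{ij}=\bar{P}_{i,-j},
\end{equation*}
with all indices in $G$. Since $F_G$ is invertible, the assignment $P\mapsto A=PF^*$ is a linear bijection between the $P$-space and $\widetilde{T}_{F_G}C_N$, and it preserves real dimension. So $d(F_G)$ equals the real dimension of the space $V$ of such $P$'s.

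The key idea is then to analyse $V$ column by column. Fix $j\in G$. The first relation says that the $j$-th column of $P$, viewed as a function $i\mapsto P_{ij}$ on $G$, is invariant under translation by $j$, hence under translation by the cyclic subgroup $\langle j\rangle\subset G$. Thus the $j$-th column is an arbitrary $\mathbb C$-valued function on the quotient $G/\langle j\rangle$, which has $|G|/\mathrm{ord}(j)$ elements. So before imposing the second relation, the complex dimension of the $j$-th column is $|G|/\mathrm{ord}(j)$.

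Now the second relation couples columns $j$ and $-j$. Note $\mathrm{ord}(j)=\mathrm{ord}(-j)$, so these two columns have the same complex dimension. I pair the columns according to the involution $j\mapsto -j$ on $G$:
\begin{itemize}
\item If $2j\neq 0$, the pair $\{j,-j\}$ contributes a complex vector space of dimension $|G|/\mathrm{ord}(j)$ (the $j$-th column is free, the $-j$-th column is then determined by conjugation, and consistency with the translation invariance is automatic since $\langle j\rangle=\langle -j\rangle$). In real terms this is $2\cdot|G|/\mathrm{ord}(j)=|G|/\mathrm{ord}(j)+|G|/\mathrm{ord}(-j)$.
\item If $2j=0$, the column is forced to be real on $G/\langle j\rangle$, giving a real dimension of $|G|/\mathrm{ord}(j)$.
\end{itemize}
In both cases, the contribution to the real dimension equals $|G|/\mathrm{ord}(j)$ summed over the elements of the pair. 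Hence
\begin{equation*}
d(F_G)=\dim_{\mathbb R}V=\sum_{j\in G}\frac{|G|}{\mathrm{ord}(j)},
\end{equation*}
as required.

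The only delicate point I anticipate is the bookkeeping of the reality condition together with the translation invariance: one must check that the relation $P_{ij}=\bar P_{i,-j}$ restricted to the self-paired columns $\{j:2j=0\}$ is compatible with (and does not further cut down) the $\langle j\rangle$-invariance, and that for $2j\neq 0$ one is free to prescribe the $j$-th column arbitrarily on $G/\langle j\rangle$ without any hidden constraint coming from the $-j$-th column. Both checks are immediate since $\langle j\rangle=\langle -j\rangle$ and the map $i\mapsto i$ commutes with translation by $j$; so no genuine obstacle arises, and the computation is a straightforward dimension count once Proposition 4.1 is in hand.
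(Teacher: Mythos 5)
Your proposal is correct and follows essentially the same route as the paper: reduce to the space of matrices $P$ via Proposition 4.1, observe that the $j$-th column lives on $G/\langle j\rangle$, and pair the columns $j$ and $-j$ under the conjugation relation, distinguishing the case $2j=0$ (real column) from $2j\neq 0$ (one free complex column per pair). The consistency checks you flag at the end are indeed the only delicate points, and they go through exactly as you indicate.
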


\begin{proof}
According to Proposition 4.1 above, the undephased defect $d(F_G)$ is the dimension of the real vector space formed by the matrices $P\in M_N(\mathbb C)$ satisfying:
$$P_{ij}=P_{i+j,j}=\bar{P}_{i,-j}$$

Here, and in what follows, the various indices $i,j,\ldots$ will be taken in $G$. Now the point is that, in terms of the columns of our matrix $P$, the above conditions are:

(1) The entries of the $j$-th column of $P$, say $C$, must satisfy $C_i=C_{i+j}$.

(2) The $(-j)$-th column of $P$ must be conjugate to the $j$-th column of $P$.

Thus, in order to count the above matrices $P$, we can basically fill the columns one by one, by taking into account the above conditions. In order to do so, consider the subgroup $G_2=\{j\in G|2j=0\}$, and then write $G$ as a disjoint union, as follows:
$$G=G_2\sqcup X\sqcup(-X)$$ 

With this notation, the algorithm is as follows. First, for any $j\in G_2$ we must fill the $j$-th column of $P$ with real numbers, according to the periodicity rule $C_i=C_{i+j}$. Then, for any $j\in X$ we must fill the $j$-th column of $P$ with complex numbers, according to the same periodicity rule $C_i=C_{i+j}$. And finally, once this is done, for any $j\in X$ we just have to set the $(-j)$-th column of $P$ to be the conjugate of the $j$-th column.

So, let us compute the number of choices for filling these columns. Our claim is that, when uniformly distributing the choices for the $j$-th and $(-j)$-th columns, for $j\notin G_2$, there are exactly $[G:<j>]$ choices for the $j$-th column, for any $j$. Indeed:

(1) For the $j$-th column with $j\in G_2$ we must simply pick $N$ real numbers subject to the condition $C_i=C_{i+j}$ for any $i$, so we have indeed $[G:<j>]$ such choices.

(2) For filling the $j$-th and $(-j)$-th column, with $j\notin G_2$, we must pick $N$ complex numbers subject to the condition $C_i=C_{i+j}$ for any $i$. Now since there are $[G:<j>]$ choices for these numbers, so a total of $2[G:<j>]$ choices for their real and imaginary parts, on average over $j,-j$ we have $[G:<j>]$ choices, and we are done again.

Summarizing, the dimension of the vector space formed by the matrices $P$, which is equal to the number of choices for the real and imaginary parts of the entries of $P$, is:
$$d(F_G)=\sum_{j\in G}[G:<j>]$$

But this number is exactly the one in the statement, and this finishes the proof.
\end{proof}

The above results suggest a number of concrete and abstract computations, to be done in the next sections. For the moment, let us just record the following conjecture:

\begin{conjecture}
For a regular Hadamard matrix $H\in M_N(\mathbb C)$ we have
$$\widetilde{T}_HC_N=\mathbb C\cdot[\widetilde{T}_HC_N]_{\mathbb Q}$$
where $[\widetilde{T}_HC_N]_{\mathbb Q}$ consists of the matrices $A\in\widetilde{T}_HC_N$ having rational entries.
\end{conjecture}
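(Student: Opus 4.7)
My plan is to use Galois descent from a cyclotomic field, taking the Fourier case as a template. First, I would verify the conjecture on Fourier matrices $F_G$. The proof of Theorem 4.2 already produces a basis for the space of admissible $P$-matrices whose entries lie in $\{0,1\}$ (for columns indexed by $G_2$) and pairs of $\{0,1\}$-columns with conjugate partners (for $j\in X$). The resulting matrices $A=PF^*$ have entries in the cyclotomic field $\mathbb Q(w)$, $w=e^{2\pi i/|G|}$. Averaging each basis vector over $\mathrm{Gal}(\mathbb Q(w)/\mathbb Q)$ then yields a rational basis of $\widetilde{T}_{F_G}C_N$, settling the Fourier case and providing the algebraic skeleton for the general argument.

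Second, for a general regular $H$, I would exploit the regularity decomposition directly in the equations of Theorem 2.5. By definition, for each $i\neq j$ the list $(H_{ik}\bar H_{jk})_{k}$ partitions into blocks of the form $\lambda_\alpha\{w_\alpha^s : 1\leq s\leq n_\alpha\}$ with $w_\alpha$ a root of unity and $\lambda_\alpha\in\mathbb T$ possibly depending on continuous parameters. The $(i,j)$-equation becomes
$$\sum_\alpha \lambda_\alpha \sum_{s=1}^{n_\alpha} w_\alpha^s\,(A_{i k_{\alpha,s}}-A_{j k_{\alpha,s}})=0.$$
At generic parameter values the phases $\lambda_\alpha$ are $\mathbb R$-linearly independent modulo their conjugate pairing, which forces each inner atomic sum to vanish separately; this is precisely the mechanism used in Proposition 3.5 to treat $q\neq\pm 1$. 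The residual system then has coefficients in a single cyclotomic field $\mathbb Q(w)$, and combining the $\mathrm{Gal}(\mathbb Q(w)/\mathbb Q)$-action with the reality constraint $A\in M_N(\mathbb R)$ produces a rational basis, exactly as in the Fourier case.

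The main obstacle, which I would address last, is the behavior at the special parameter values where the defect jumps (compare the rise from $8$ to $10$ at $q=\pm 1$ in Proposition 3.5). At such points the phases $\lambda_\alpha$ satisfy extra $\mathbb R$-linear relations, the atomic decomposition above breaks down, and genuinely new solutions appear whose rationality is not automatic. I would try to handle this by a semicontinuity argument: the $\mathbb Q$-locus $[\widetilde{T}_HC_N]_{\mathbb Q}$ varies with $H$ in a constructible way and $d(H)$ is upper semicontinuous, so rational families constructed at generic parameters must specialize correctly to the jump locus, and the additional solutions must lie in the limit of rational ones. Making this rigorous — checking in particular that the jump-time extra solutions are rational rather than merely algebraic — is where I expect the bulk of the technical work, and would most likely require a parametric refinement of Proposition 4.1 for arbitrary regular matrices, in the spirit of the yet-to-be-developed "regular design" theory suggested by \cite{bbs} and \cite{lle}.
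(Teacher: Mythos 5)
This statement is an open conjecture in the paper: no proof is given there, and the author explicitly notes that even the Fourier matrix case is only settled for $N=p$ and $N=p^2$, the general Fourier case being ``part of the above conjecture.'' So your proposal cannot be measured against a paper proof; the question is whether it closes the gap, and it does not. The first step already fails as stated: the basis of Theorem 4.2 gives matrices $A=PF^*$ with entries in $\mathbb Q(w)$, but averaging over $\mathrm{Gal}(\mathbb Q(w)/\mathbb Q)$ is only legitimate if the solution space is stable under the Galois action, and the defining system is not obviously so --- applying $\sigma_m:w\mapsto w^m$ to the $(i,j)$-equation $\sum_k w^{(i-j)k}(A_{ik}-A_{jk})=0$ yields $\sum_k w^{m(i-j)k}(A_{ik}-A_{jk})=0$, which is not one of the original equations (it mixes the difference pattern of the pair $(i,j)$ with the coefficient pattern of another pair). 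Even where averaging does land back in the space, it can annihilate vectors or fail to span; establishing that the real solution space is defined over $\mathbb Q$ is precisely the content of the conjecture, not a preliminary reduction of it.

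The second and third steps have a more basic problem. The defect is \emph{upper} semicontinuous (it is a corank), so at special parameter values the enveloping tangent space jumps \emph{up}: in Proposition 3.5 the two extra dimensions at $q=\pm1$ are exactly the solutions that are \emph{not} limits of solutions at nearby generic $q$. Your proposed fix --- that ``the additional solutions must lie in the limit of rational ones'' constructed at generic parameters --- is therefore backwards; specialization of generic families can only produce a subspace of at most the generic dimension, and says nothing about the new solutions appearing on the jump locus, which is the only nontrivial case. Finally, the reduction to ``generic parameter values'' does not apply to the conjecture as stated: regularity is a property of a single fixed matrix $H$ (e.g.\ the isolated matrix $T_6$ is regular), the phases $\lambda_\alpha$ in its vanishing-sum decompositions are specific numbers that may satisfy $\mathbb R$-linear relations, and there need be no family in which $H$ sits. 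What would be needed is an intrinsic argument for a fixed regular $H$, presumably via the not-yet-existing axiomatization of regular designs that the paper itself points to; your outline does not supply it.
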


More precisely, this result seems to hold for the Fourier matrices, and we conjecture that this result holds in fact in general, under the ``regularity'' assumption in \cite{bbs}.

For usual Fourier matrices $F_N$, the result definitely holds at $N=p$ prime, because the minimal polynomial of $w$ over $\mathbb Q$ is simply $P=1+w+\ldots+w^{p-1}$. The case $N=p^2$ has also a simple solution, coming from the fact that all the $p\times p$ blocks of our matrix $A$ can be shown to coincide. In general, this method should probably work for $N=p^k$, or even for any $N\in\mathbb N$. However, since we don't have a complete proof here, the Fourier matrix statement should be regarded as being part of the above conjecture.

\section{Finite group calculations}

In this section we complete the computation of the defect of $F_G$, by using the formula in Theorem 4.2. It is convenient to consider the quantity $\delta(F_G)=|G|^{-1}d(F_G)$, which behaves better, and to try to compute it in the abstract group framework:

\begin{definition}
Associated to a finite group $G$ is the following quantity:
$$\delta(G)=\sum_{g\in G}\frac{1}{ord(g)}$$
\end{definition}

As a first example, consider a cyclic group $G=\mathbb Z_N$, with $N=p^a$ power of a prime. The count here is very simple, over sets of elements having a given order:
$$\delta(\mathbb Z_{p^a})=1+(p-1)p^{-1}+(p^2-p)p^{-2}+\ldots+(p^a-p^{a-1})p^{-1}=1+a-\frac{a}{p}$$

In order to extend this kind of count to the general abelian case, we use two ingredients. First is the following lemma, which splits the computation over isotypic components:

\begin{lemma}
For any finite groups $G,H$ we have:
$$\delta(G\times H)\geq\delta(G)\delta(H)$$
In addition, if $(|G|,|H|)=1$, we have equality. 
\end{lemma}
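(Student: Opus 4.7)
The plan is to expand both sides of the inequality as double sums indexed by pairs $(g,h) \in G \times H$ and compare them termwise. The key arithmetic fact is that in a direct product, the order of a pair is $\mathrm{ord}(g,h) = \mathrm{lcm}(\mathrm{ord}(g),\mathrm{ord}(h))$. Hence
$$\delta(G \times H) = \sum_{g \in G}\sum_{h \in H}\frac{1}{\mathrm{lcm}(\mathrm{ord}(g),\mathrm{ord}(h))},$$
whereas
$$\delta(G)\,\delta(H) = \sum_{g \in G}\sum_{h \in H}\frac{1}{\mathrm{ord}(g)\,\mathrm{ord}(h)}.$$
So the statement reduces to a purely elementary comparison between $\mathrm{lcm}(a,b)$ and $ab$ for positive integers $a,b$.

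For the inequality, I would invoke the standard identity $\mathrm{lcm}(a,b)\cdot \gcd(a,b) = ab$, which gives $\mathrm{lcm}(a,b) \leq ab$ always, hence $1/\mathrm{lcm}(a,b) \geq 1/(ab)$ termwise, yielding $\delta(G\times H)\geq \delta(G)\delta(H)$.

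For the equality case under the coprimality assumption $(|G|,|H|)=1$, I would use the fact that the order of any element divides the order of the group, so $\mathrm{ord}(g)\mid |G|$ and $\mathrm{ord}(h)\mid |H|$. Since $(|G|,|H|)=1$, this forces $(\mathrm{ord}(g),\mathrm{ord}(h))=1$ for every pair $(g,h)$, which in turn yields $\mathrm{lcm}(\mathrm{ord}(g),\mathrm{ord}(h)) = \mathrm{ord}(g)\cdot\mathrm{ord}(h)$. Termwise equality then gives $\delta(G\times H) = \delta(G)\delta(H)$.

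There is no real obstacle here: both steps are one-line consequences of elementary number theory once one has the order-of-a-pair formula. The only thing worth being careful about is that the order-of-a-pair formula holds in general (not just in the abelian case), so the statement as written for arbitrary finite groups $G,H$ is correct; the proof does not use commutativity beyond the trivial fact that $\mathrm{ord}(g,h)$ in $G\times H$ equals the lcm of the component orders.
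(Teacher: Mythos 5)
Your proof is correct and follows essentially the same route as the paper: expand $\delta(G\times H)$ as a double sum, use $\mathrm{ord}(g,h)=\mathrm{lcm}(\mathrm{ord}(g),\mathrm{ord}(h))\leq \mathrm{ord}(g)\,\mathrm{ord}(h)$ termwise for the inequality, and observe that coprimality of $|G|$ and $|H|$ forces the lcm to be the product for the equality case. You supply slightly more detail than the paper (explicitly invoking Lagrange's theorem for the coprimality of the element orders), but there is no substantive difference.
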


\begin{proof}
Indeed, we have the following estimate:
\begin{eqnarray*}
\delta(G\times H)
&=&\sum_{gh}\frac{1}{ord(g,h)}
=\sum_{gh}\frac{1}{[ord(g),ord(h)]}\\
&\geq&\sum_{gh}\frac{1}{ord(g)\cdot ord(h)}
=\delta(G)\delta(H)
\end{eqnarray*}

Now in the case $(|G|,|H|)=1$, the least common multiple appearing on the right becomes a product, $[ord(g),ord(h)]=ord(g)\cdot ord(h)$, so we have equality. 
\end{proof}

\begin{proposition}
For a finite abelian group $G$ we have
$$\delta(G)=\prod_p\delta(G_p)$$
where $G_p$ with $G=\times_pG_p$ are the isotypic components of $G$.
\end{proposition}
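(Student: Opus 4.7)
The plan is to obtain this as a direct consequence of Lemma 5.2, via induction on the number of distinct primes dividing $|G|$. The structure theorem for finite abelian groups gives us a canonical decomposition $G=\prod_p G_p$ where each $G_p$ is the $p$-primary component (i.e.\ the Sylow $p$-subgroup, which is normal and unique in the abelian setting). The crucial input is that for distinct primes $p\neq q$, the orders $|G_p|$ and $|G_q|$ are coprime, being powers of $p$ and $q$ respectively. Thus the equality case of Lemma 5.2 applies between any single $G_p$ and the product of the remaining components.

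More concretely, I would enumerate the primes dividing $|G|$ as $p_1,\ldots,p_s$, set $H_k=G_{p_k}\times\ldots\times G_{p_s}$, and argue by descending induction. The order $|G_{p_k}|=p_k^{a_k}$ is coprime to $|H_{k+1}|=p_{k+1}^{a_{k+1}}\cdots p_s^{a_s}$, so the equality clause of Lemma 5.2 gives
$$\delta(H_k)=\delta(G_{p_k}\times H_{k+1})=\delta(G_{p_k})\,\delta(H_{k+1}).$$
Iterating from $k=1$ down through the base case $H_s=G_{p_s}$ then yields the product formula in the statement.

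There is essentially no obstacle here: the lemma does all the work, and the only thing to verify is the combinatorial bookkeeping that the pairwise coprimality of orders of distinct primary components licenses repeated application of the equality case. The one minor point worth flagging is that Lemma 5.2 as stated handles a two-factor product, so one should either invoke it inductively as above or remark that the argument in its proof (using that $\mathrm{lcm}(\mathrm{ord}(g_1),\ldots,\mathrm{ord}(g_s))=\mathrm{ord}(g_1)\cdots\mathrm{ord}(g_s)$ whenever the orders are pairwise coprime) extends verbatim to any finite number of factors with pairwise coprime orders, which is exactly the situation of the $p$-primary decomposition.
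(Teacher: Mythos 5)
Your argument is correct and is exactly the paper's proof, which simply observes that the result follows from Lemma 5.2 because each $|G_p|$ is a power of $p$; you have merely spelled out the routine induction over the distinct primes that the paper leaves implicit. No issues.
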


\begin{proof}
This is clear from Lemma 5.2, because the order of $G_p$ is a power of $p$.
\end{proof}

The second ingredient concerns the $p$-groups, and is as follows:

\begin{lemma}
For the $p$-groups, the quantities
$$c_k=\#\{g\in G|ord(g)\leq p^k\}$$
are multiplicative, in the sense that $c_k(G\times H)=c_k(G)c_k(H)$.
\end{lemma}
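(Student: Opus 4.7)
The plan is very short, because the statement reduces immediately to a remark about orders in $p$-groups. The core observation is that in any $p$-group every element has order a power of $p$, so for two elements $g\in G$ and $h\in H$ with $G,H$ both $p$-groups, both $\mathrm{ord}(g)$ and $\mathrm{ord}(h)$ are powers of $p$, and therefore
$$\mathrm{ord}(g,h)=[\mathrm{ord}(g),\mathrm{ord}(h)]=\max(\mathrm{ord}(g),\mathrm{ord}(h)).$$
This is the single fact that makes the $p$-group hypothesis essential, and it is exactly the step that fails in general (for non-coprime, non-$p$-group products, lcm is strictly bigger than max).

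Given this, I would argue as follows. The condition $\mathrm{ord}(g,h)\leq p^k$ becomes $\max(\mathrm{ord}(g),\mathrm{ord}(h))\leq p^k$, which in turn is equivalent to the conjunction $\mathrm{ord}(g)\leq p^k$ and $\mathrm{ord}(h)\leq p^k$. Therefore the set counted by $c_k(G\times H)$ decomposes as a Cartesian product:
$$\{(g,h)\in G\times H\mid\mathrm{ord}(g,h)\leq p^k\}=\{g\in G\mid\mathrm{ord}(g)\leq p^k\}\times\{h\in H\mid\mathrm{ord}(h)\leq p^k\}.$$
Taking cardinalities yields $c_k(G\times H)=c_k(G)\,c_k(H)$, which is the claim.

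There is no real obstacle here: unlike Lemma 5.2, where the lcm genuinely differs from the product of orders on a set of positive measure (creating the inequality $\delta(G\times H)\geq\delta(G)\delta(H)$, with equality only under coprimality), the use of $\max$ instead of lcm in the $p$-group setting collapses the double sum into a literal product of indicator sums, giving the equality with no correction term.
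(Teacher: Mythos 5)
Your proof is correct and follows the same route as the paper: both rewrite the condition $ord(g,h)\leq p^k$ as the conjunction $ord(g)\leq p^k$ and $ord(h)\leq p^k$, so that the counted set splits as a Cartesian product. You merely make explicit the justification the paper leaves implicit, namely that in $p$-groups the lcm of orders is a max, which is exactly where the $p$-group hypothesis enters.
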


\begin{proof}
Indeed, for a product of $p$-groups we have:
\begin{eqnarray*}
c_k(G\times H)
&=&\#\{(g,h)|ord(g,h)\leq p^k\}\\
&=&\#\{(g,h)|ord(g)\leq p^k,ord(h)\leq p^k\}\\
&=&\#\{g|ord(g)\leq p^k\}\#\{h|ord(h)\leq p^k\}
\end{eqnarray*}

We recognize at right $c_k(G)c_k(H)$, and we are done.
\end{proof}

Let us compute now $\delta$ in the general isotypic case:

\begin{proposition}
For $G=\mathbb Z_{p^{a_1}}\times\ldots\times\mathbb Z_{p^{a_r}}$ with $a_1\leq a_2\leq\ldots\leq a_r$ we have
$$\delta(G)=1+\sum_{k=1}^rp^{(r-k)a_{k-1}+(a_1+\ldots+a_{k-1})-1}(p^{r-k+1}-1)[a_k-a_{k-1}]_{p^{r-k}}$$
with the convention $a_0=0$, and by using the notation $[a]_q=1+q+q^2+\ldots+q^{a-1}$.
\end{proposition}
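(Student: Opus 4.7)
The plan is to reduce $\delta(G)$ to a telescoping expression in the multiplicative quantities $c_k$ from Lemma 5.4, compute $c_k$ explicitly using multiplicativity over the cyclic factors, and then sum geometrically on suitable index intervals.

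First I would rewrite $\delta(G)$ using $c_k = \#\{g\in G:\mathrm{ord}(g)\leq p^k\}$. Since an element of order exactly $p^k$ (with $k\geq 1$) contributes $1/p^k$ while the identity contributes $1$, grouping elements by order gives
$$\delta(G)=1+\sum_{m=1}^{a_r}\frac{c_m-c_{m-1}}{p^m}.$$
For a single factor, $c_k(\mathbb Z_{p^a})=p^{\min(a,k)}$, so Lemma 5.4 and induction yield
$$c_m(G)=\prod_{i=1}^{r}p^{\min(a_i,m)}=p^{\sum_i\min(a_i,m)}.$$

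Next I would localize the exponent. Because $a_1\leq\cdots\leq a_r$, for any $m$ with $a_{k-1}\leq m\leq a_k$ we have $\min(a_i,m)=a_i$ for $i\leq k-1$ and $\min(a_i,m)=m$ for $i\geq k$, hence
$$c_m=p^{A_{k-1}+(r-k+1)m},\qquad A_{k-1}:=a_1+\cdots+a_{k-1}.$$
(The two one-sided formulas at $m=a_{k-1}$ agree, so this is consistent at boundaries; when $a_{k-1}=a_k$ the range is empty and contributes nothing.) For $m$ in $[a_{k-1}+1,a_k]$ this gives
$$\frac{c_m-c_{m-1}}{p^m}=p^{A_{k-1}-(r-k+1)+(r-k)m}\,(p^{r-k+1}-1).$$

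Then I would perform the geometric sum over $m\in[a_{k-1}+1,a_k]$. Using $\sum_{m=a_{k-1}+1}^{a_k}q^m=q^{a_{k-1}+1}[a_k-a_{k-1}]_q$ with $q=p^{r-k}$ (valid also for $q=1$, i.e.\ $k=r$, by the convention $[a]_1=a$), and collecting exponents
$$A_{k-1}-(r-k+1)+(r-k)(a_{k-1}+1)=(r-k)a_{k-1}+A_{k-1}-1,$$
the contribution from the $k$-th range becomes exactly
$$p^{(r-k)a_{k-1}+(a_1+\cdots+a_{k-1})-1}(p^{r-k+1}-1)[a_k-a_{k-1}]_{p^{r-k}}.$$
Summing over $k=1,\ldots,r$ and adding the initial $1$ yields the stated formula.

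The steps are essentially a bookkeeping exercise; the one delicate point is making sure the geometric sum and exponent collection work uniformly across all ranges, including the degenerate cases $a_{k-1}=a_k$ (empty range, handled by $[0]_q=0$) and $k=r$ (where $q=1$, handled by $[a]_1=a$). No single step is a real obstacle, provided Lemma 5.4 and the explicit formula $c_k(\mathbb Z_{p^a})=p^{\min(a,k)}$ are established first.
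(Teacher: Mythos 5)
Your proposal is correct and follows essentially the same route as the paper: reduce $\delta(G)$ to the telescoping sum $1+\sum_m (c_m-c_{m-1})/p^m$, compute $c_m=p^{\sum_i\min(a_i,m)}$ via multiplicativity, and sum geometric progressions over the intervals $[a_{k-1}+1,a_k]$. The only (harmless) difference is bookkeeping: you combine $c_m-c_{m-1}$ into a single term before summing, giving one geometric series per interval, whereas the paper separates positive and negative contributions into $2r$ progressions.
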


\begin{proof}
First, in terms of the numbers $c_k$, we have:
$$\delta(G)=1+\sum_{k\geq 1}\frac{c_k-c_{k-1}}{p^k}$$

In the case of a cyclic group $G=\mathbb Z_{p^a}$ we have $c_k=p^{\min(k,a)}$. Thus, in the general isotypic case $G=\mathbb Z_{p^{a_1}}\times\ldots\times\mathbb Z_{p^{a_r}}$ we have:
$$c_k=p^{\min(k,a_1)}\ldots p^{\min(k,a_r)}=p^{\min(k,a_1)+\ldots+\min(k,a_r)}$$

Now observe that the exponent on the right is a piecewise linear function of $k$. More precisely, by assuming $a_1\leq a_2\leq\ldots\leq a_r$ as in the statement, the exponent is linear on each of the intervals $[0,a_1],[a_1,a_2],\ldots,[a_{r-1},a_r]$. So, the quantity $\delta(G)$ to be computed will be 1 plus the sum of $2r$ geometric progressions, 2 for each interval.

In practice now, the numbers $c_k$ are as follows:
$$c_0=1,c_1=p^r,c_2=p^{2r},\ldots,c_{a_1}=p^{ra_1},$$
$$c_{a_1+1}=p^{a_1+(r-1)(a_1+1)},c_{a_1+2}=p^{a_1+(r-1)(a_1+2)},\ldots,c_{a_2}=p^{a_1+(r-1)a_2},$$
$$c_{a_2+1}=p^{a_1+a_2+(r-2)(a_2+1)},c_{a_2+2}=p^{a_1+a_2+(r-2)(a_2+2)},\ldots,c_{a_3}=p^{a_1+a_2+(r-2)a_3},$$
$$\ldots\ldots\ldots$$
$$c_{a_{r-1}+1}=p^{a_1+\ldots+a_{r-1}+(a_{r-1}+1)},c_{a_{r-1}+2}=p^{a_1+\ldots+a_{r-1}+(a_{r-1}+2)},\ldots,c_{a_r}=p^{a_1+\ldots+a_r}$$

Now by separating the positive and negative terms in the above formula of $\delta(G)$, we have indeed $2r$ geometric progressions to be summed, as follows:
\begin{eqnarray*}
\delta(G)
&=&1+(p^{r-1}+p^{2r-2}+p^{3r-3}+\ldots+p^{a_1r-a_1})\\
&&-(p^{-1}+p^{r-2}+p^{2r-3}+\ldots+p^{(a_1-1)r-a_1})\\
&&+(p^{(r-1)(a_1+1)-1}+p^{(r-1)(a_1+2)-2}+\ldots+p^{a_1+(r-2)a_2})\\
&&-(p^{a_1r-a_1-1}+p^{(r-1)(a_1+1)-2}+\ldots+p^{a_1+(r-1)(a_2-1)-a_2})\\
&&+\ldots\\
&&+(p^{a_1+\ldots+a_{r-1}}+p^{a_1+\ldots+a_{r-1}}+\ldots+p^{a_1+\ldots+a_{r-1}})\\
&&-(p^{a_1+\ldots+a_{r-1}-1}+p^{a_1+\ldots+a_{r-1}-1}+\ldots+p^{a_1+\ldots+a_{r-1}-1})
\end{eqnarray*}

Now by performing all the sums, we obtain:
\begin{eqnarray*}
\delta(G)
&=&1+p^{-1}(p^r-1)\frac{p^{(r-1)a_1}-1}{p^{r-1}-1}\\
&&+p^{(r-2)a_1+(a_1-1)}(p^{r-1}-1)\frac{p^{(r-2)(a_2-a_1)}-1}{p^{r-2}-1}\\
&&+p^{(r-3)a_2+(a_1+a_2-1)}(p^{r-2}-1)\frac{p^{(r-3)(a_3-a_2)}-1}{p^{r-3}-1}\\
&&+\ldots\\
&&+p^{a_1+\ldots+a_{r-1}-1}(p-1)(a_r-a_{r-1})
\end{eqnarray*}

By looking now at the general term, we get the formula in the statement.
\end{proof}

Let us go back now to the formula in Theorem 4.2. By putting it together with the various results in this section, we obtain our main result in this paper, namely:

\begin{theorem}
For a finite abelian group $G$, decomposed as $G=\times_pG_p$, we have
$$d(F_G)=|G|\prod_p\left( 1+\sum_{k=1}^rp^{(r-k)a_{k-1}+(a_1+\ldots+a_{k-1})-1}(p^{r-k+1}-1)[a_k-a_{k-1}]_{p^{r-k}}\right)$$
where $a_0=0$ and $a_1\leq a_2\leq\ldots\leq a_r$ (depending on $p$) are such that $G_p=\mathbb Z_{p^{a_1}}\times\ldots\times\mathbb Z_{p^{a_r}}$.
\end{theorem}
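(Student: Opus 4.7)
The plan is to simply assemble the three ingredients already established in the preceding material. First I would rewrite Theorem 4.2 in the normalized form
\[
d(F_G)=|G|\,\delta(G),\qquad \delta(G)=\sum_{g\in G}\frac{1}{\operatorname{ord}(g)},
\]
which is immediate from Definition 5.1, since every term in the sum $\sum_g |G|/\operatorname{ord}(g)$ carries a common factor of $|G|$. This reduces the task to computing $\delta(G)$.

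Next I would decompose $G$ into its isotypic components $G=\prod_p G_p$ and invoke Proposition 5.3, which gives
\[
\delta(G)=\prod_p \delta(G_p).
\]
This step uses Lemma 5.2, and the equality case there applies because the orders $|G_p|$ are coprime powers of distinct primes. At this point the problem is completely reduced to evaluating $\delta$ on each $p$-primary factor.

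Finally, for each prime $p$, write $G_p=\mathbb Z_{p^{a_1}}\times\ldots\times\mathbb Z_{p^{a_r}}$ with $a_1\leq\ldots\leq a_r$, and apply Proposition 5.5, which provides the explicit closed form
\[
\delta(G_p)=1+\sum_{k=1}^r p^{(r-k)a_{k-1}+(a_1+\ldots+a_{k-1})-1}(p^{r-k+1}-1)[a_k-a_{k-1}]_{p^{r-k}}.
\]
Substituting this into the factorization of $\delta(G)$ and multiplying through by $|G|$ yields exactly the formula in the statement.

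There is no genuine obstacle left, since all the work has been front-loaded: the bulk of the argument lives in Theorem 4.2 (the column-filling counting argument for $\widetilde{T}_{F_G}C_N$), in Lemma 5.2 together with Proposition 5.3 (the multiplicativity across coprime factors), and in Proposition 5.5 (the piecewise-geometric summation of $\sum_k (c_k-c_{k-1})p^{-k}$ using the multiplicativity of the order-counts $c_k$ from Lemma 5.4). The present theorem is then a one-line assembly. If anything requires care, it is only notational bookkeeping: making sure that the per-prime data $r$ and $a_1,\ldots,a_r$ in the product are understood to depend on $p$, and that the convention $a_0=0$ together with the definition $[a]_q=1+q+\ldots+q^{a-1}$ is carried consistently from Proposition 5.5 into the final displayed formula.
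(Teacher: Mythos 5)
Your proposal is correct and matches the paper's own proof exactly: the paper likewise assembles $d(F_G)=|G|\delta(G)$ from Theorem 4.2, the factorization $\delta(G)=\prod_p\delta(G_p)$ from Proposition 5.3, and the closed form for $\delta(G_p)$ from Proposition 5.5. Nothing further is needed.
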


\begin{proof}
Indeed, we know from Theorem 4.2 that we have $d(F_G)=|G|\delta(G)$, and the result follows from Proposition 5.3 and Proposition 5.5.
\end{proof}

As a first illustration, we can recover in this way the defect computation in \cite{tz2}:

\begin{corollary}
The undephased defect of a usual Fourier matrix $F_N$ is given by
$$d(F_N)=N\prod_{i=1}^s\left(1+a_i-\frac{a_i}{p_i}\right)$$
where $N=p_1^{a_1}\ldots p_s^{a_s}$ is the decomposition of $N$ into prime factors.
\end{corollary}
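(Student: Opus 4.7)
The plan is to derive Corollary 5.7 as the cyclic-group specialization of Theorem 5.6. Since $N = p_1^{a_1}\cdots p_s^{a_s}$, the Chinese remainder theorem gives the decomposition
$$\mathbb Z_N \simeq \mathbb Z_{p_1^{a_1}}\times\cdots\times\mathbb Z_{p_s^{a_s}},$$
so, in the notation of Theorem 5.6, the $p_i$-isotypic component of $\mathbb Z_N$ is the single cyclic factor $\mathbb Z_{p_i^{a_i}}$. That is, for each prime $p_i$ the local rank is $r = 1$, with single exponent $a_1 = a_i$.

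Next I would simply plug $r = 1$ into the inner bracket of Theorem 5.6. The sum over $k$ collapses to its unique term $k = 1$, and with the convention $a_0 = 0$ the exponent $(r-k)a_{k-1} + (a_1 + \ldots + a_{k-1}) - 1$ reduces to $-1$, the factor $p^{r-k+1}-1$ becomes $p - 1$, and $[a_k - a_{k-1}]_{p^{r-k}} = [a_i]_{p^0} = [a_i]_1 = a_i$ (using the definition $[a]_q = 1 + q + \ldots + q^{a-1}$). The whole bracket therefore collapses to
$$1 + p^{-1}(p-1)\,a_i \;=\; 1 + a_i - \frac{a_i}{p_i},$$
and taking the product over $i$ while multiplying by $|G| = N$ yields the claimed formula.

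There is essentially no obstacle here: the only subtlety is checking that the degenerate indexing $k = 1$, $r-k = 0$, $a_{k-1} = 0$ in Theorem 5.6 is interpreted consistently, so that $[a]_1 = a$. In fact one can bypass Theorem 5.6 altogether and assemble the corollary from more elementary ingredients already in place: Theorem 4.2 gives $d(F_{\mathbb Z_N}) = N\,\delta(\mathbb Z_N)$, Proposition 5.3 makes $\delta$ multiplicative over the coprime isotypic components $\mathbb Z_{p_i^{a_i}}$, and the one-line computation $\delta(\mathbb Z_{p^a}) = 1 + a - a/p$ performed immediately after Definition 5.1 handles each prime-power factor. Multiplying these three facts together produces the corollary in one stroke, which is probably the cleaner presentation to record.
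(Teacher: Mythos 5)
Your proposal is correct and follows the paper's own route exactly: specialize Theorem 5.6 to the cyclic group $\mathbb Z_N$, whose isotypic components are the single factors $\mathbb Z_{p_i^{a_i}}$ (so $r=1$), and check that the bracket collapses to $1+p_i^{-1}(p_i-1)a_i$. Your closing remark that one could instead combine Theorem 4.2, Proposition 5.3, and the direct computation of $\delta(\mathbb Z_{p^a})$ is also valid, but it is only a repackaging of the same ingredients rather than a genuinely different argument.
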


\begin{proof}
The underlying group here is the cyclic group $G=\mathbb Z_N$, whose isotypic components are the cyclic groups $G_{p_i}=\mathbb Z_{p_i^{a_i}}$. By applying now Theorem 5.6, we get:
$$d(F_N)=N\prod_{i=1}^s\left(1+p_i^{-1}(p_i-1)a_i\right)$$

But this is exactly the formula in the statement.
\end{proof}

As a second illustration, for the group $G=\mathbb Z_{p^{a_1}}\times\mathbb Z_{p^{a_2}}$ with $a_1\leq a_2$ we obtain:
\begin{eqnarray*}
d(F_G)
&=&p^{a_1+a_2}(1+p^{-1}(p^2-1)[a_1]_p+p^{a_1-1}(p-1)(a_2-a_1))\\
&=&p^{a_1+a_2-1}(p+(p^2-1)\frac{p^{a_1}-1}{p-1}+p^{a_1}(p-1)(a_2-a_1))\\
&=&p^{a_1+a_2-1}(p+(p+1)(p^{a_1}-1)+p^{a_1}(p-1)(a_2-a_1))
\end{eqnarray*}

In particular at $p=2$ and $a_1=a_2=1$ we obtain that the defect of the Fourier matrix $F_G=F_2\otimes F_2$, already known from Proposition 3.5 to be 10, is indeed:
$$d(F_G)=2(2+3+0)=10$$

Finally, let us mention that for general non-abelian groups, there doesn't seem to be any reasonable algebraic formula for the quantity $\delta(G)$. As an example, consider the dihedral group $D_N$, consisting of $N$ symmetries and $N$ rotations. We have:
$$\delta(D_N)=\frac{N}{2}+\delta(\mathbb Z_N)$$

Now by remembering the formula for $\mathbb Z_N$, namely $\delta(\mathbb Z_N)=\prod (1+p_i^{-1}(p_i-1)a_i)$, it is quite clear that the $N/2$ factor couldn't be incorporated in any nice way.

\section{Probabilistic speculations}

We have seen in the previous section that the defect of a Fourier matrix $F_G$ can be computed by doing some explicit calculations in the associated group $G$. In this section we speculate on a possible extension of this method, by using quantum groups.

The story here goes back to Popa's paper \cite{pop}, who discovered that the ``orthogonal MASA'' condition $\Delta\perp U\Delta U^*$ holds inside the $N\times N$ matrices precisely when $H=\sqrt{N}U$ is Hadamard. Such orthogonal MASA are known to produce subfactors, and Jones subsequently came with a complete study of the problem, from the point of view of statistical mechanics \cite{jo1}, general subfactor theory \cite{jsu}, and planar algebras \cite{jo2}.

Thanks to some general Tannakian correspondences, it was realized in the late 90's that these subfactors come in fact from certain quantum groups. More precisely, each complex Hadamard matrix $H\in M_N(\mathbb C)$ produces a certain quantum permutation group $G\subset S_N^+$, and the subfactor associated to $H$ can be understood in terms of $G$.

The construction $H\to G$ can be summarized as follows:

\begin{definition}
Let $H\in M_N(\mathbb C)$ be a complex Hadamard matrix.
\begin{enumerate}
\item Set $\xi_{ij}=H_i/H_j$, where $H_1,\ldots,H_N\in\mathbb T^N$ are the rows of $H$.

\item Let $P_{ij}\in M_N(\mathbb C)$ be the orthogonal rank one projection on $\xi_{ij}$.

\item Define a representation $\pi:C(S_N^+)\to M_N(\mathbb C)$ by $\pi(u_{ij})=P_{ij}$.

\item Consider the factorizations of type $\pi:C(S_N^+)\to C(G)\to M_N(\mathbb C)$.

\item Let $G\subset S_N^+$ be the minimal object producing such a factorization.
\end{enumerate}
\end{definition}

In this definition $S_N^+$ is the quantum permutation group of Wang \cite{wan}, known to be a compact quantum group in the sense of Woronowicz \cite{wor}. For technical details regarding this construction we refer to \cite{bbs}, \cite{bfs}, and for a recent survey on the subject, to \cite{ban}. 

The basic result here, which is for interest for us, is:

\begin{proposition}
The quantum permutation group associated to the Fourier matrix $F_G$ of a finite abelian group $G$ is nothing but the group $G$ itself, acting on itself.
\end{proposition}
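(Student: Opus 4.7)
The plan is to compute the representation $\pi$ explicitly for $H=F_G$, observe that its image is a commutative $C^*$-algebra of dimension $N=|G|$, and then identify it, together with the induced Hopf structure, as $C(G)$ with $G$ acting on itself by translation.

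First I would identify $G$ with its Pontryagin dual and write the Fourier matrix entries as $(F_G)_{ik}=\chi_k(i)$, so that the row quotients become
$$(\xi_{ij})_k=\chi_k(i)\overline{\chi_k(j)}=\chi_k(i-j),$$
depending only on the difference $g:=i-j\in G$. A one-line character sum then gives $\langle \xi_{i'j'},\xi_{ij}\rangle=N\cdot\delta_{i-j,\,i'-j'}$, so the rank-one projections $P_{ij}$ are pairwise orthogonal whenever $i-j\neq i'-j'$ and coincide whenever $i-j=i'-j'$. Writing $Q_g:=P_{ij}$ for any pair with $i-j=g$, we obtain an orthogonal family $\{Q_g\}_{g\in G}$ of $N$ rank-one projections; the magic unitary relation $\sum_j P_{ij}=1$ for each $i$ forces $\sum_{g\in G}Q_g=1$.

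This already shows that $\pi(C(S_N^+))$ is the commutative $C^*$-algebra generated by $N$ orthogonal projections summing to $1$, so it is isomorphic to $\mathbb{C}^N\cong C(G)$, and $\pi$ factors as $C(S_N^+)\xrightarrow{\varphi} C(G)\xrightarrow{\rho} M_N(\mathbb{C})$ with $\varphi(u_{ij})=e_{i-j}$ and $\rho(e_g)=Q_g$, where $e_g$ denotes the Dirac mass at $g$. Since the second map $\rho$ is injective, this factorization is automatically the minimal one. To finish I would compare it with the canonical quotient $C(S_N^+)\to C(G)$ coming from the translation action $g\cdot j=g+j$: there the standard coordinates are $u_{ij}(g)=\delta_{g\cdot j,\,i}=\delta_{g,\,i-j}=e_{i-j}(g)$, which matches $\varphi$ exactly. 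A routine check that $\varphi$ intertwines the coproducts $\Delta(u_{ij})=\sum_k u_{ik}\otimes u_{kj}$ and $\Delta(e_g)=\sum_h e_h\otimes e_{g-h}$ (via the substitution $h=i-k$) then identifies $\varphi$ as a Woronowicz algebra morphism, yielding precisely the inclusion $G\subset S_N^+$ given by $G$ acting on itself.

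The one point that deserves a little care, rather than a genuine obstacle, is the minimality step: one has to be sure that the image of $\pi$ carries a compatible Hopf $C^*$-algebra quotient structure, so that the factorization produced lives in the category of quantum subgroups of $S_N^+$ rather than being merely a $C^*$-algebraic factorization. The computation of $\Delta\circ\varphi$ above is exactly what is needed to settle this, so the whole argument is short once the key observation that $\xi_{ij}$ depends only on $i-j$ is in place.
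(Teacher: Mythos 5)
Your argument is correct and follows the same route the paper takes, namely exhibiting the factorization $\pi:C(S_N^+)\to C(G)\to M_N(\mathbb C)$ and checking its minimality; the paper merely sketches this and defers the details to \cite{bbs}, whereas you supply them (the key observation that $\xi_{ij}$ depends only on $i-j$, the orthogonality of the resulting projections $Q_g$, the compatibility of $u_{ij}\mapsto e_{i-j}$ with the coproducts, and the injectivity of $\rho$ forcing minimality). Nothing is missing.
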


\begin{proof}
The idea here is that examining the definition of $\pi$ leads to an obvious factorization of type $\pi:C(S_N^+)\to C(G)\to M_N(\mathbb C)$, with $N=|G|$. Now since $G$ is classical, the minimality property of this factorization is not hard to establish. See \cite{bbs}.
\end{proof}

This result, when combined with those in the previous section, raises the idea of computing the defect of an arbitrary complex Hadamard matrix $H\in M_N(\mathbb C)$ by using the associated quantum permutation group $G\subset S_N^+$. We do not know if this is possible:

\begin{question}
Does the quantum permutation group $G\subset S_N^+$ see the defect of the complex Hadamard matrix $H\in M_N(\mathbb C)$? And if so, how exactly?
\end{question}

In what follows we will just speculate on the second question. We recall from section 4 that in the case of Fourier matrices we have indeed a formula, namely:
$$d(H)=\sum_{g\in G}\frac{|G|}{ord(g)}$$

The problem is that the quantity on the right is not available for arbitrary quantum groups $G\subset S_N^+$, simply because these quantum groups are abstract objects, having no elements $g\in G$. So, let us pause now from our defect investigation, and remember what exact numeric quantities are available, for an arbitrary quantum group $G\subset S_N^+$.

The answer comes from Woronowicz's paper \cite{wor}, who developed there an analogue of the Peter-Weyl theory for the compact quantum Lie groups. In particular, Woronowicz established an analogue of the following key representation theory formula:
$$\int_GTr(g)^kdg=dim(Fix(u^{\otimes k}))$$

Now in the case of the usual symmetric group $S_N\subset O_N$, the character $Tr:G\to\mathbb R$ on the left is nothing but the number of fixed points. So, as a conclusion, we can say that ``a quantum permutation group $G\subset S_N^+$ doesn't exist as a concrete object, but the fixed point statistics $Tr:G\to\mathbb R$ does exist as a noncommutative random variable, and its moments can be computed by using representation theory methods''. 

As an example, a well-known result in classical probability tells us that in the limit $N\to\infty$, the character $Tr:S_N\to\mathbb N$ follows the Poisson law. By using representation theory methods, one can prove that in the limit $N\to\infty$ the character $Tr:S_N^+\to\mathbb R$ follows a free Poisson law, in the sense of free probability theory \cite{vdn}. See \cite{bco}.

In view of this observation, the following problem appears:

\begin{problem}
In the case of Fourier matrices $H=F_G$, can one recover the defect $d(H)$ from the fixed point statistics $Tr:G\to\mathbb N$ of the underlying group?
\end{problem}

The answer here is of course ``yes'', due for instance to the following formula:
$$ord(g)=\min\{r\in\mathbb N|Tr(g^r)=N\}$$

In order to deduce a formula for the defect, it is convenient to use the normalized trace $tr=Tr/N$. The first remark is that the ``min'' can be replaced by two parameters:
$$\frac{1}{ord(g)}=\lim_{k\to\infty}\lim_{l\to\infty}\frac{1}{l}\sum_{r=1}^ltr(g^r)^k$$

Indeed, with $k\to\infty$ only the maximal values of $tr$, namely $tr=1$, contribute to the computation of the limit, and these appear with a frequency of $1/ord(g)$. 

Now by summing over all the group elements, we obtain:
\begin{eqnarray*}
\sum_{g\in G}\frac{1}{ord(g)}
&=&\sum_{g\in G}\lim_{k\to\infty}\lim_{l\to\infty}\frac{1}{l}\sum_{r=1}^ltr(g^r)^k\\
&=&\lim_{k\to\infty}\lim_{l\to\infty}\frac{1}{l}\sum_{r=1}^l\sum_{g\in G}tr(g^r)^k\\
&=&N\lim_{k\to\infty}\lim_{l\to\infty}\frac{1}{l}\sum_{r=1}^l\int_Gtr(g^r)^k dg
\end{eqnarray*}

The point now is that the variables $\chi_r(g)=tr(g^r)$ are familiar objects, called Diaconis-Shahshahani type variables for $G$. See \cite{dsh}. Thus, we obtain the following result:

\begin{theorem}
For $G\subset S_N$ abelian acting on itself, with $N=|G|$, we have
$$d(F_G)=N^2\lim_{k\to\infty}\lim_{l\to\infty}\frac{1}{l}\sum_{r=1}^l\int_G\chi_r^k$$
where $\chi_r(g)=tr(g^r)$ are the Diaconis-Shahshahani type variables for $G$.
\end{theorem}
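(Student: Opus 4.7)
The strategy is to chain the exact formula from Theorem 4.2 with an elementary asymptotic identity expressing $1/ord(g)$ as an iterated limit of character moments, and then use the finiteness of $G$ to swap sums and limits freely. By Theorem 4.2 we have $d(F_G) = N \sum_{g \in G} 1/ord(g)$, so the task reduces to proving
$$\sum_{g \in G} \frac{1}{ord(g)} = N \lim_{k\to\infty}\lim_{l\to\infty}\frac{1}{l}\sum_{r=1}^l \int_G \chi_r^k.$$

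The key input, which is where the hypothesis ``$G$ abelian acting on itself'' is used, is the computation of the relevant traces. For the regular action of $G$ on itself, a fixed point of $g \in G$ satisfies $gh = h$, hence $Tr(g) = N \delta_{g,e}$, and so $tr(g^r) = \delta_{g^r,e} \in \{0,1\}$. In particular $tr(g^r)^k = tr(g^r)$ for every $k \geq 1$, and
$$\frac{1}{l}\sum_{r=1}^l tr(g^r)^k = \frac{1}{l}\#\{1 \leq r \leq l : ord(g) \mid r\} \longrightarrow \frac{1}{ord(g)}$$
as $l \to \infty$. The outer limit in $k$ is therefore redundant in the present classical setting; it is written only so that the identity acquires meaning for quantum permutation groups $G \subset S_N^+$, where $tr$ can take arbitrary values in $[0,1]$ and $k\to\infty$ isolates the maximal value $tr = 1$.

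To finish, since $G$ is finite I can commute the sum over $g$ with both limits and the $r$-average, and rewrite the inner sum as an integral using $\int_G f\, dg = N^{-1}\sum_{g\in G} f(g)$. This yields
$$\sum_{g \in G}\frac{1}{ord(g)} = \lim_{k\to\infty}\lim_{l\to\infty}\frac{1}{l}\sum_{r=1}^l \sum_{g \in G} tr(g^r)^k = N\lim_{k\to\infty}\lim_{l\to\infty}\frac{1}{l}\sum_{r=1}^l \int_G \chi_r^k,$$
and multiplication by the prefactor $N$ from Theorem 4.2 supplies the $N^2$ in front. There is no substantial obstacle here: the only step demanding genuine care is the identification $tr(g^r) = \delta_{g^r,e}$ for the regular representation, after which the asymptotic identity and the interchange of finite sum with the two limits are both elementary.
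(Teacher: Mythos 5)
Your proof is correct and follows essentially the same route as the paper: combine Theorem 4.2 with the identity $\frac{1}{ord(g)}=\lim_{k}\lim_{l}\frac{1}{l}\sum_{r=1}^{l}tr(g^r)^k$, sum over $g\in G$, and convert the sum into an integral against normalized counting measure. Your explicit observation that $tr(g^r)=\delta_{g^r,e}\in\{0,1\}$ for the regular action, so that the $k$-limit is literally redundant in the classical case, is a welcome sharpening of the paper's looser remark that ``only the maximal values of $tr$ contribute.''
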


\begin{proof}
This is clear from the above considerations.
\end{proof}

The point now is that for an arbitrary quantum permutation group $G\subset S_N^+$ the variables $\chi_r$ are also available, cf. \cite{bcs}. So, we have the following question:

\begin{question}
Given a complex Hadamard matrix $H\in M_N(\mathbb C)$, is there a formula for $d(H)$ in terms of the variables $\chi_r$ over the associated quantum group $G\subset S_N^+$?
\end{question}

Finally, let us mention that the planar algebra associated to $H$ is known to be given by $P_k=Fix(u^{\otimes k})$, so that the planar algebra dimensions are:
$$\dim P_k=\int_G\chi_1^k$$

As a conclusion, it might happen that the planar algebra dimensions don't see the defect, but the Diaconis-Shahshahani type variables, taken together, do see it.

\end{document}